\documentclass[letterpaper, 10 pt, conference]{IEEEtran}

\usepackage{amssymb,amsfonts,amsmath,amsthm}
\usepackage{mathtools}
\usepackage{eucal,bm}
\usepackage{gensymb,subfigure}
\usepackage{multirow}
\usepackage{booktabs}
\usepackage{times,color,url}
\usepackage{comment}
\usepackage{cite}
\usepackage{graphicx}
\usepackage[all]{xy}
\usepackage{algorithm}
\usepackage{algpseudocode}
\usepackage{subfigure}
\usepackage{bm}
\usepackage{color}
\usepackage{wrapfig}
\usepackage{epsf}
\usepackage{times,mathptm}
\usepackage{url}
\usepackage{nomencl}
\pdfminorversion=4
\newtheorem{theorem}{Theorem}

\newtheorem{lemma}{Lemma}

\newcommand{\squeezeup}{\vspace{-2.5mm}}
\begin{document}
\title{Tractable Structure Learning in Radial Physical Flow Networks}
\author{\IEEEauthorblockN{Deepjyoti~Deka*, Scott~Backhaus*, and Michael~Chertkov*\\}
\IEEEauthorblockA{*Los Alamos National Laboratory, USA\\
Email: deepjyoti@lanl.gov, backhaus@lanl.gov, chertkov@lanl.gov}}

\maketitle

\begin{abstract}
Physical Flow Networks are different infrastructure networks that allow the flow of physical commodities through edges between its constituent nodes. These include power grid, natural gas transmission network, water pipelines etc. In such networks, the flow on each edge is characterized by a function of the nodal potentials on either side of the edge. Further the net flow in and out of each node is conserved. Learning the structure and state of physical networks is necessary for optimal control as well as to quantify its privacy needs. We consider radial flow networks and study the problem of learning the operational network from a loopy graph of candidate edges using statistics of nodal potentials. Based on the monotonic properties of the flow functions, the key result in this paper shows that if variance of the difference of nodal potentials is used to weight candidate edges, the operational edges form the minimum spanning tree in the loopy graph. Under realistic conditions on the statistics of  nodal injection (consumption or production), we provide a greedy structure learning algorithm with quasi-linear computational complexity in the number of candidate edges in the network. Our learning framework is very general due to two significant attributes. First it is independent of the specific marginal distributions of nodal potentials and only uses order properties in their second moments. Second, the learning algorithm is agnostic to exact flow functions that relate edge flows to corresponding potential differences and is applicable for a broad class of networks with monotonic flow functions. We demonstrate the efficacy of our work through realistic simulations on diverse physical flow networks and discuss possible extensions of our work to other regimes.
\end{abstract}

\begin{IEEEkeywords}
Physical flow networks, monotonic flow, positive quadrant dependence, Spanning Tree, Graphical Models, Missing data, Computational Complexity
\end{IEEEkeywords}
\section{Introduction}
\label{sec:intro}
Physical flow networks \cite{dembo1989or} form strategic components of modern society's activities and help in the mass transport of energy and daily utilities from far off-generation points to end users through pipes/edges. Example of such networks include the power grid (flow of electricity), gas transmission network (delivery of gas for household usage and energy production), water pipelines (for drinking and industrial water) etc. Further, in the continuum domain (fluid approximation), discrete particular network like vehicular traffic can also be considered as a physical flow network. Over time, such networks have grown in size and become vital for the smooth functioning of most, if not all, activities - be it industry or household related. Efficient control and fast monitoring of the state of such flow networks is crucial for their real-time usage. Further, the advancement in smart active devices (energy meters, smart thermostats/heaters etc.) has led to efforts in distributed but optimal control of flow networks. Fast topology (set of inconnections in the flow network) and usage (steady state of current vehicular traffic, statistics of power, gas, water consumption etc.) estimation are necessary tools to ensure the optimal application of decentralized schemes. Finally the assimilation of online communication tools for monitoring and signaling exchange in flow networks puts them at risk of cyber adversaries and disruptive agents. Low overhead algorithms to learn the topology and state of the network can help quantify the cyber risk associated with compromised set of measurements and help guide preventive measures and placement of secure devices.

Due to the diversity of transported commodities, flow networks vary in the analytical flow models. However such flow models often satisfy common laws. One, net flow at each node is conserved, i.e., the total flow injected into each node is equal to the sum outward flow on all edges connected to that node. Second, the flow on each edge is guided by the difference between potentials at the two nodes on either side of the edge. Examples of such potential include voltages in power networks, pressure in gas and water networks. Similarly, one can think of virtual potentials in traffic network fluid models. In most flow networks, the potential difference across an edge is expressed as a monotonic function of the flow on it, implying that the flow increases when the difference in potentials increases and vice versa. Note that the monotonic function may be non-linear. In this paper, we discuss topology learning algorithms for radial flow networks with monotonic flow functions. We focus specifically on radially structured networks as they appear frequently in different contexts. For example, electricity distribution grids \cite{hoffman2006practical} are known to operate in a radial topology due to economic and operational reasons \cite{distgridpart1}. Similarly, gas transmission networks \cite{gashandbook,misra2015optimal} and some water networks \cite{aminwater} have a tree-like operational configuration. However, the set of all permissible edges in the network (operational and open) form an underlying loopy graph with cycles. The radial operational configuration is achieved by restricting the flow to a subset of the permissible edges in the network as shown in Fig.~\ref{fig:city}. In certain networks like the electricity distribution grid, this radial configuration can be changed over a few hours by switching on and off edges (transmission line breakers) and needs to be estimated for control applications. Real-time meters on edges that relay information on current flow and operational status are often sparsely present. Even if the radial structure is static, third-party applications may be interested in learning the topology using indirect (non-edge based) measurements as access to network structure information is often restricted. Hence, we analyze the problem of estimating the true operational structure using only empirical nodal potential measurements. Specifically, \textit{we show that second moments of nodal potentials are sufficient to reconstruct the operational radial topology by a greedy algorithm}. Surprising, our learning scheme does not depend on the exact flow function (linear or non-linear) for the network as long as the flow function is monotonic. Thus, it has wide applicability for a variety of networks as mentioned in detail in subsequent sections. Note that brute force approaches to learn the topology is computationally prohibitive due to the exponential number of radial topologies that can be constructed from a dense loopy graph of permissible edges.

\subsection{Prior Work}
Past work in learning the structure of flow networks have generally focussed on specific applications. In particular, nodal measurement based structure estimation of power distribution networks is an area of active research.  Researchers have used Markov random fields \cite{he2011dependency}, signs in inverse covariance matrix of voltages (potentials) \cite{bolognani2013identification}, envelope comparison based reconstruction methods \cite{berkeley,sandia1,sandia2} etc. Limited power flow measurements have been used to estimate the topology using maximum likelihood tests in \cite{ramstanford}. In our prior work \cite{distgridpart1,distgridpart2,distgrid_ecc}, iterative greedy learning schemes based on a linear power flow model have been used to determine the operating power grid, even in the presence of missing/unobserved nodes.

We are not aware of prior work in learning the structure of gas or water networks using information of nodal potentials. However there exist several efforts on different optimization (stochastic, robust etc.) and control (optimal, distributed etc.) schemes for these networks that depend on information of network structure and use nodal potentials as variables. These include geometric programming based optimization schemes specifically for radial gas \cite{misra2015optimal} and water networks \cite{aminwater}. Recent work \cite{vuffray2015monotonicity} has demonstrated the tractability of several robust optimization schemes in gas networks due to the monotonic nature of the function that relates edge flows to nodal potential differences. The learning algorithm in this paper shows that the monotonicity of the flow function makes structure learning using nodal potentials tractable as well. These algorithms can in turn enable optimization problems to be tackled without the prior knowledge of the underlying flow network as that can be easily estimated.

Aside from the mentioned work in flow networks, the Chow-Liu algorithm \cite{chow1968approximating} uses a spanning tree algorithm for learning tree-structured graphical models that is based on the pairwise-factorization of the systems's mutual information. This is generalized in \cite{choi2011learning} to tree-structured graphical models with hidden/latent variables through the use of information distances as edge weights.

\subsection{Contribution of This Work}
Most of the previous work in learning structure of physical flow networks are limited to specific cases, in particular power grids. Further they assume linear flow function relating edge flow and nodal potentials. The fundamental contribution of this paper is to develop learning algorithms that are applicable for physical flow networks with monotonic flow functions that can be nonlinear and even distinct for each edge in the system. We show that under independent nodal injections, the variance of potential differences in such networks show provable trends that can guide greedy algorithms for structure learning. Our main algorithm uses variance in difference of nodal potentials as edge weight and identifies the operational structure by a spanning tree algorithm. In particular, the algorithm does not need any information of the flow function involved or nodal injection statistics. If the flow functions are known, the algorithm can be used to estimate the statistics of nodal injections. In essence, this work generalizes prior work \cite{distgrid_ecc} on linear power flow models to general radial networks with monotonic flow functions that are distinct and non-linear. The worst-case computational complexity of our algorithm is quasi-linear in the number of permissible edges in the network, which is efficient to learn the structure of large networks. We are not aware of any existing work that estimates the structure of general flow networks with non-linear flow functions. We demonstrate the performance of our algorithms through experiments on two test networks, one pertaining to power grid and the other to gas grid.

The rest of the manuscript is organized as follows. Section \ref{sec:structure} introduces nomenclature and relations between injections, flows and potentials in physical flow networks through detailed example networks. We present key properties and trends in nodal potentials for flow networks in Section \ref{sec:trends}. Design of the spanning tree based learning algorithm is given in Section \ref{sec:algo1}. We also include a part on extensions of our work in Section \ref{sec:algo1}. Simulation results of our learning algorithm on different example networks are presented in Section \ref{sec:experiments}. Finally, Section \ref{sec:conclusions} contains conclusions, extensions and discussion of future work.
\squeezeup
\section{Flow Models for Flow Networks}
\label{sec:structure}
We first provide the notation for the topology of the flow network.

\textbf{Radial Structure}: Mathematically, the overall physical flow network is represented as a loopy graph ${\cal G}=({\cal V},{\cal E})$, where ${\cal V}$ is the set of nodes and ${\cal E}$ is the set of all permissible edges. Nodes are denoted by alphabets ($a$,$b$,...) and edge between two nodes $a$, $b$ by node pair $(ab)$. The `radial' structure composed of operational edges is denoted by tree $\cal T$ with nodes ${\cal V}_{\cal T}$ and operational edge set ${\cal E}_{\cal T} \subset {\cal E}$. We restrict our discussion to one operational tree as shown in Fig.~\ref{fig:city} though our results hold for the case with multiple disjoint trees.
\begin{figure}[!bt]
\centering
\includegraphics[width=0.20\textwidth, height =.23\textwidth]{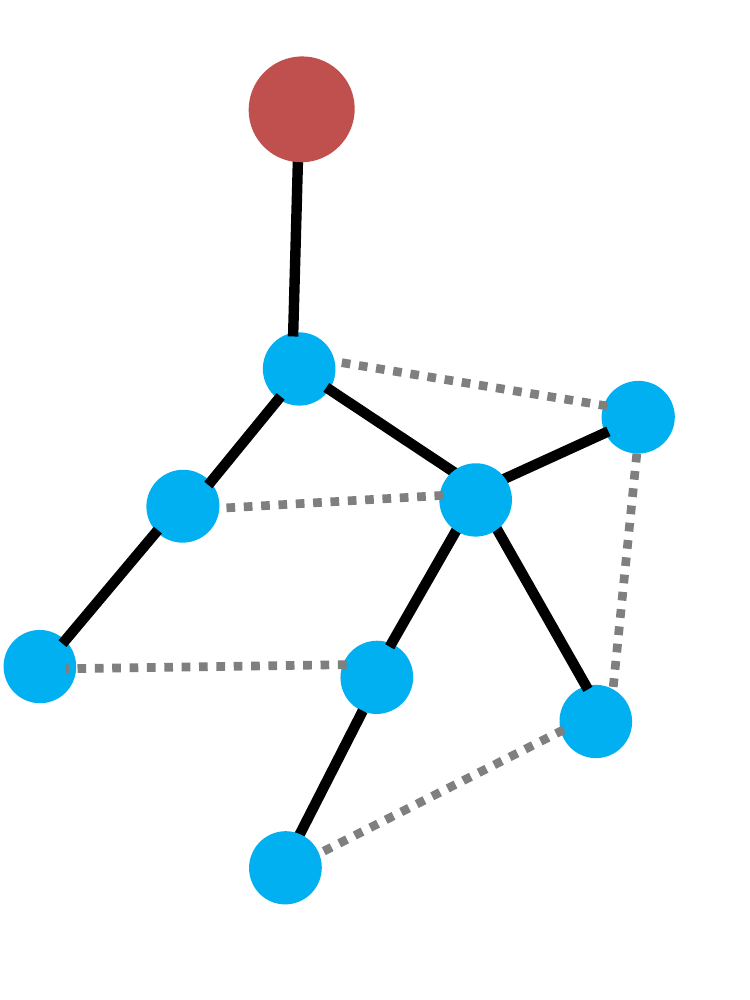}
\squeezeup
\squeezeup
\caption{A radial flow network with operational edges colored solid black. Dotted grey lines represent non-operational edges. The red node denotes the `reference' node.
\label{fig:city}}
\end{figure}
Next, we list the flow equations for the network.

\textbf{Flow Models}: The flow network is characterized by three sets of variables:  nodal injections, edge flows and nodal potentials. At each node $a$, flow is conserved and $P_a = \sum_{b:(ab)\in {\cal E}_T}f_{ab}$ where $P_a$ is the injection at $a$ and $f_{ab}$ is the flow from $a$ to $b$ on edge $(ab)$. In vector form, we write
\begin{align}
P = M^Tf \label{inj}
\end{align}
Here $M$ is the node to edge incidence matrix in tree $\cal T$. Each edge $(ab)$ in the network is represented by a row equal to $(e_a^T - e_b^T)$ in $M$. Here $e_a$ denotes the standard basis vector with $1$ at the $a^{th}$ position. Note that $\textbf{1}^TP =\textbf{1}^TPM^Tf =0$ where $\textbf{1}$ is the vector of all ones. Thus the network is `lossless' and total flow is conserved. Next, the flow $f_{(ab)}$ on edge $(ab)$ and potentials $\pi_a$ and $\pi_b$ at nodes $a$ and $b$ satisfy
\begin{align}
\pi_a-\pi_b = g_{ab}(f_{ab}) \label{flow}
\end{align}
where $g_{ab}$ is the monotonic flow function for edge $(ab)$ and can be distinct for each edge.  Further, the flows and injections are unchanged if all nodal potentials are increased/decreased by the same amount. Following standard practice \cite{abur2004power,misra2015optimal}, one node's potential can be considered as reference and potentials are measures relative to that of the reference node. The substation or node with largest production of power or gas is generally considered as the reference node. We give the following examples of lossless flow networks.

\textbf{Power Distribution Grid}: Distribution grid \cite{hoffman2006practical} is the final tier of the power grid that extends from the distribution substation to the end-users. Flows in the radial distribution grid are composed of active and reactive power flows that are related to nodal voltages according to Kirchoff's laws. During stable operations, the line flows can be expressed by the following lossless approximation commonly termed as LinDistFlow model \cite{89BWa,89BWb,89BWc}:
\begin{align}
&P_a = \sum_{\substack{(ab)\in{\cal E}^{{\cal T}}\\b \neq a}} f^p_{ab}, Q_a = \sum_{\substack{(ab)\in{\cal E}^{{\cal T}}\\b \neq a}} f^q_{ab}
\label{injpower}\\
&v_a^2 - v_b^2 = 2\left(r_{ab}f^p_{ab}+x_{ab} f^q_{ab}\right)  \forall (ab) \in {\cal E}^{{\cal T}}\label{flowpower}
\end{align}
Here $P_a$ ($Q_a$) is the nodal active (reactive) power injection at node $a$ while $v_a$ is the voltage magnitude. $f^p_{ab}$ ($f^q_{ab}$) is the active (reactive) flow on edge $(ab)$ and $r_{ab}$ ($x_{ab}$) is the line resistance (reactance). From Eq.~(\ref{flowpower}), it is clear that squares of voltage magnitudes ($v^2$) represent potentials here and the flow function that relates potentials to line flow is linear. Similar linear flow functions in this area include linear coupled (LC) AC power flow model \cite{distgridpart1,bolognani2016existence} with complex voltages as potentials and resistive DC power flow models \cite{abur2004power} with phase angles as potentials.

\textbf{Natural Gas Transmission Network}: In gas grids, natural gas is driven from generators to consumers (households, gas turbines) through pipelines \cite{gashandbook,misra2015optimal}. During steady state, gas flow is governed by the following relation:
\begin{align}
&P_a = \sum_{(ab)\in{\cal E}^{{\cal T}}} f_{ab}\label{injgas}\\
&\phi_a^2 - \phi_b^2 = \alpha_{ab}f_{ab}|f_{ab}|+ \beta_{ab} \forall (ab) \in {\cal E}^{{\cal T}}\label{flowgas}
\end{align}
Here $P_a$ denotes the nodal injection. $f_{ab}$ is the gas flux (flow per unit length) from node $a$ to $b$. $\phi_a$ is the pressure at node $a$ and its second power represents the potential. The quantity $\beta_{ab}$ is the pressure boost provided by the compressor on edge $(ab)$ and is constant over short time intervals where changes in flow are driven by difference in nodal potentials. Further, $\alpha_{ab}$ represents a constant factor that depends on friction, length and diameter of the pipe (edge $(ab)$) as well as temperature, universal gas constant and gas compressibility \cite{misra2015optimal}. Note that the flow function here is second-order but monotonic.

\textbf{Radial Water Network}: Water networks consists of pipes where nodal `head pressures' at their ends control the direction and quantity of flow in them \cite{aminwater,boulos2006comprehensive}. The flow equations are non-linear and similar to Eqs.~(\ref{injgas}),(\ref{flowgas}) for gas networks, but with a different exponent ( $>1$) for flow. We omit describing them mathematically for brevity.
Similarly, radial \textbf{traffic networks} also satisfy conservation of flow at each node and can be modelled in a similar way \cite{como2010robustness}.

We use $\mu_{g(X)}$ and $\Omega_{g(X)}$ to denote the mean and variance of function $g$ defined over random variable $X$. Similarly, $\Omega_{g(X)h(Y)}$ denotes the covariance (centered second moment) of functions $g$ and $h$ defined over random variables $X$ and $Y$ respectively. Here $X$ and $Y$ may be correlated. Thus
\begin{align}
&\mu_{g(X)} = \mathbb{E}[g(X)],~\Omega_{g(X)} = \mathbb{E}[(g(X)- \mu_{g(X)})^2],\nonumber\\
& \Omega_{g(X)h(Y)} = \mathbb{E}[(g(X)- \mu_{g(X)})(h(X)- \mu_{h(X)})]\label{covar}
\end{align}
In the next Section, we derive algebraic properties of second moments of nodal potentials in radial networks using the flow functions. These properties will help derive our learning algorithms.

\section{Trends in Second moment of Potentials in Radial Networks}
\label{sec:trends}
Let tree $\cal T$ denote the operational radial flow network with edge set ${\cal E}_{\cal T}$. Without a loss of generality, we assume that all edges are directed towards the reference node. We denote the unique path (sequence of edges) from any node $a$ to the reference node in tree ${\cal T}$ by ${\cal P}_a^{{\cal T}}$. The set of all nodes $b$ such that path ${\cal P}_{b}^{\cal T}$ passes through node $a$ is called the `descendant' set $D_{a}^{\cal T}$ of node $a$. By definition, $a \in D_{a}^{\cal T}$. If $b \in D_{a}^{\cal T}$ and $(ba) \in {\cal E}_{\cal T}$), we term $a$ as parent and $b$ as its child. See Fig.~\ref{fig:picinc} for an illustrative example.

Eqs.~(\ref{inj},\ref{flow}) represent the relation between injections ($P$), flows ($f$) and potentials ($\pi$) in the network. As stated in the previous section, the potential at the reference node is fixed, while its injection is given by negative sum of all other nodal injections (due to lossless property). The number of degrees of freedom in the injection or potential vector is thus one less than the number of nodes in the system. Following standard practice, we analyze the `reduced' network of only non-reference nodes with nodal potentials measured relative to that of the reference node. We remove the column corresponding to the reference node from the incidence matrix $M$ and omit its injection and potential terms from the vectors $P$ and $\pi$ respectively. Abusing notation, we use $M$, $P$ and $\pi$ to refer to the reduced versions of their respective definitions in the remaining part of this paper. Note that the reduced incidence matrix $M$ has full rank. As all edges are directed towards the reference node, the inverse $M^{-1}$ has the following specific analytical structure \cite{68Resh}(also see Fig.~\ref{fig:picinc}):
\squeezeup
\begin{align} 
\squeezeup
{\huge M}^{-1}(a,r)=\begin{cases}1 & \text{if edge $r\in {\cal P}_a^{{\cal T}}$}\\
0 & \text{if edge~} r \not\in {\cal P}_a^{{\cal T}} \end{cases} \label{treeinv}
\squeezeup
\end{align}
\squeezeup
\begin{figure}[ht]
\centering
\subfigure[]{\includegraphics[width=0.15\textwidth,height = .16\textwidth]{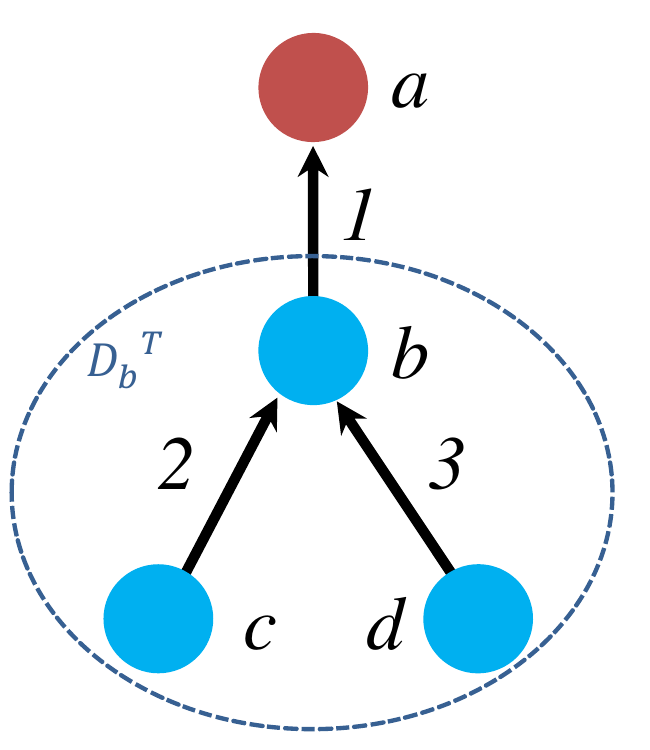}\label{fig:picinc3}}
\subfigure[]{\includegraphics[width=0.16\textwidth,height=0.08\textwidth]{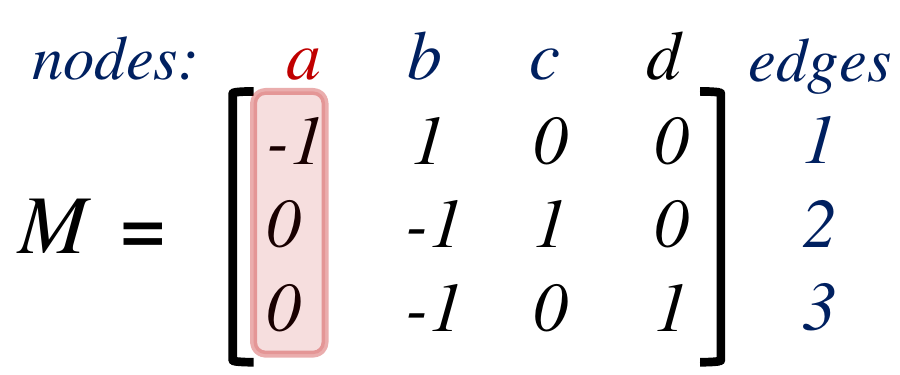}\label{fig:picinc_1}}
\subfigure[]{\includegraphics[width=0.15\textwidth,height=0.08\textwidth]{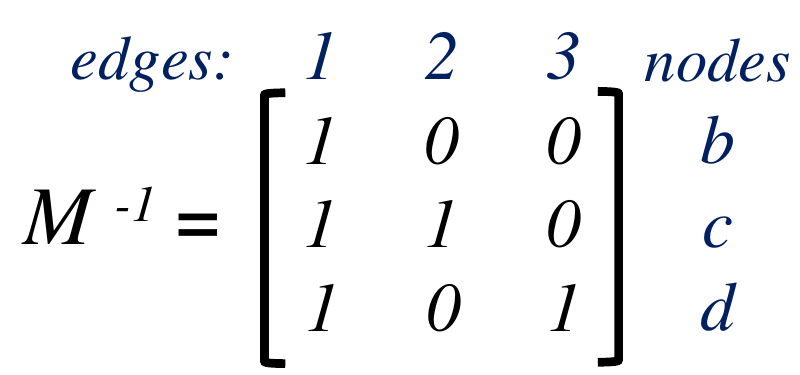}\label{fig:picinc1}}
\squeezeup
\caption{(a) Radial network with four nodes ($a,b,c,d$) and three edges ($1,2,3$) directed toward the reference node $a$. ${\cal P}_c^{{\cal T}} = \{(cb),(ba)\}$. $D_{b}^{\cal T}= \{b,c,d\}$. $b$ is the parent and $c,d$ are its children nodes. (b) Reduced incidence matrix $M$ derived by removing column corresponding to node $a$. (c) $M^{-1}$ as per Eq.~(\ref{treeinv}).
\label{fig:picinc}}
\vspace{-3mm}
\end{figure}

Using Eq.~\ref{inj} with Eq.~(\ref{treeinv}), the flow on edge $(ab)$ with node $a$ and its parent $b$ satisfies:
\squeezeup
\begin{align}
f_{ab} = \sum_{c \in D_a^{\cal T}} P_c \label{flowinv}
\end{align}
Observe the flow relation in Eq.~(\ref{flow}). Let $k_1, k_2, ..k_r$ be the sequence of $r$ intermediate nodes between a node $k$ and its descendant $a$. Using a telescopic sum for nodal potentials, we have
\squeezeup
$$\pi_k - \pi_a = \pi_k - \pi_{k_1} + \pi_{k_1} - \pi_{k_2}+...+ \pi_{k_r}-\pi_a
             =  \smashoperator[lr]{\sum_{(cd) \in {\cal P}_a^{{\cal T}} - {\cal P}_k^{{\cal T}}}}             g_{cd}(f_{cd})$$
where ${\cal P}_a^{{\cal T}} - {\cal P}_k^{{\cal T}}$ consists of edges that lie in path from node $a$ to $k$. For any two nodes $a$ and $b$, we can find some node $k$ on the path from $a$ to $b$ such that $a,b$ are both descendants of $k$ and ${\cal P}_b^{{\cal T}} \cap {\cal P}_a^{{\cal T}} = {\cal P}_k^{{\cal T}}$. Here $k$ may not be distinct from $a$ or $b$. Note that ${\cal P}_b^{{\cal T}} - {\cal P}_a^{{\cal T}} = {\cal P}_b^{{\cal T}} -{\cal P}_k^{{\cal T}}$. Writing $\pi_a - \pi_b = (\pi_k - \pi_b) - (\pi_k - \pi_a)$ and using a telescopic sum, we get the following result.
\begin{lemma}\label{telescopiclemma}
For two nodes $a$ and $b$ in the flow network
\begin{align}
\pi_a - \pi_b = \smashoperator[r]{\sum_{(cd) \in {\cal P}_b^{{\cal T}} - {\cal P}_a^{{\cal T}}}} g_{cd}(f_{cd}) - \smashoperator[r]{\sum_{(cd) \in {\cal P}_a^{{\cal T}} - {\cal P}_b^{{\cal T}}}} g_{cd}(f_{cd}) \label{telescopic}
\end{align}
\end{lemma}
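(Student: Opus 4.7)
The plan is to reduce the lemma to two one-sided telescoping statements along the paths from $a$ and from $b$ to their least common ancestor $k$ in ${\cal T}$, and then subtract. The tree-theoretic fact I need is that for any two nodes $a,b$ in a tree rooted at the reference node, the two root-directed paths ${\cal P}_a^{{\cal T}}$ and ${\cal P}_b^{{\cal T}}$ agree on a common suffix of edges, and their intersection is itself a root-directed path ${\cal P}_k^{{\cal T}}$ for a unique node $k$. This is precisely the node singled out in the paragraph preceding the lemma, and it is immediate from the definitions of descendants and paths in a rooted tree.

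Next I would telescope along each sub-path. Let $a=a_0,a_1,\ldots,a_s=k$ be the node sequence on ${\cal P}_a^{{\cal T}}$ up to $k$, so that each $(a_{i-1}a_i)\in{\cal E}_{\cal T}$. Applying Eq.~(\ref{flow}) edge by edge,
\begin{align*}
\pi_k-\pi_a=\sum_{i=1}^{s}(\pi_{a_i}-\pi_{a_{i-1}})=\sum_{i=1}^{s}g_{a_{i-1}a_i}(f_{a_{i-1}a_i}),
\end{align*}
and by construction these are exactly the edges of ${\cal P}_a^{{\cal T}}-{\cal P}_k^{{\cal T}}$. An identical argument gives $\pi_k-\pi_b$ as a sum of $g_{cd}(f_{cd})$ over the edges of ${\cal P}_b^{{\cal T}}-{\cal P}_k^{{\cal T}}$.

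The final step is to subtract: writing $\pi_a-\pi_b=(\pi_k-\pi_b)-(\pi_k-\pi_a)$ and plugging in the two telescoping identities immediately yields (\ref{telescopic}), once one observes that ${\cal P}_a^{{\cal T}}-{\cal P}_k^{{\cal T}}={\cal P}_a^{{\cal T}}-{\cal P}_b^{{\cal T}}$ and ${\cal P}_b^{{\cal T}}-{\cal P}_k^{{\cal T}}={\cal P}_b^{{\cal T}}-{\cal P}_a^{{\cal T}}$, since the only common edges of the two paths are those of ${\cal P}_k^{{\cal T}}$.

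I do not expect a substantial obstacle. The only item that needs a small amount of care is the degenerate situation in which one of $a,b$ is itself an ancestor of the other (so $k\in\{a,b\}$ and one of the two sums in (\ref{telescopic}) is empty) together with the trivial case $a=b$; both are handled uniformly by the construction above, with empty telescoping sums interpreted as zero.
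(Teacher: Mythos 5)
Your proposal is correct and follows essentially the same route as the paper: identify the node $k$ with ${\cal P}_a^{{\cal T}}\cap{\cal P}_b^{{\cal T}}={\cal P}_k^{{\cal T}}$, telescope Eq.~(\ref{flow}) along each sub-path to $k$, and subtract after observing ${\cal P}_a^{{\cal T}}-{\cal P}_k^{{\cal T}}={\cal P}_a^{{\cal T}}-{\cal P}_b^{{\cal T}}$ and ${\cal P}_b^{{\cal T}}-{\cal P}_k^{{\cal T}}={\cal P}_b^{{\cal T}}-{\cal P}_a^{{\cal T}}$. Your explicit remark on the degenerate cases ($k\in\{a,b\}$ or $a=b$) is a minor addition the paper only gestures at.
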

Before further analysis, we make the following assumption on probability distributions of different nodal injections as reported in literature \cite{bolognani2013identification,distgridpart2}.

\textbf{Assumption $1$:} Nodal injections at non-reference nodes in the network are independent.

Over short time intervals, this assumption is valid as injections are affected by changes/fluctuations in user behavior that are independent. In a subsequent section, we discuss extensions/scenarios where this assumption is relaxed. We now give the following definition for dependence between random variables that is well-studied in literature \cite{positivequad1,lehmann1966}.

\textbf{Definition $1$ \cite{positivequad1,lehmann1966}:} Two random variables are termed \emph{Positive Quadrant Dependent (PQD)} if their probability distributions satisfies: $\mathbb{P}(X\leq x, Y\leq y) \geq \mathbb{P}(X\leq x)\mathbb{P}(Y \leq y)$ for all $x$ and $y$.

In other words, $X$ and $Y$ are PQD if larger (smaller) values of $X$  are associated probabilistically with larger (smaller) values of $Y$. PQD for probability distributions of random variables can thus be thought as analogous to positive correlation for their second moments. Note that two independent random variables are PQD by definition. Further the following lemma holds:

\begin{lemma}\label{PQDsum}
If $X$ and $Y$ are two independent random variables, then $X$ and $X+Y$ are PQD.
\end{lemma}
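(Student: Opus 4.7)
The plan is to condition on $Y$ and reduce the statement to the elementary bound $\min(a,b)\geq ab$ for $a,b\in[0,1]$. Unfolding Definition~$1$, what we need to show is
\begin{align*}
\mathbb{P}(X\leq x,\; X+Y\leq z) \;\geq\; \mathbb{P}(X\leq x)\,\mathbb{P}(X+Y\leq z)
\end{align*}
for every $x,z\in\mathbb{R}$.

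First, I would condition on $Y$. Since $X$ and $Y$ are independent, conditioning on $Y=y$ fixes $X+Y\leq z$ as the event $X\leq z-y$, so
\begin{align*}
\mathbb{P}(X\leq x,\; X+Y\leq z\mid Y=y) \;=\; F_X\bigl(\min(x,\,z-y)\bigr) \;=\; \min\bigl(F_X(x),\,F_X(z-y)\bigr).
\end{align*}
Similarly, $\mathbb{P}(X\leq x)\,\mathbb{P}(X+Y\leq z\mid Y=y)=F_X(x)\,F_X(z-y)$.

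Second, I would observe that because $F_X(x),F_X(z-y)\in[0,1]$, the trivial inequality $\min(a,b)\geq ab$ for $a,b\in[0,1]$ yields, pointwise in $y$,
\begin{align*}
\min\bigl(F_X(x),\,F_X(z-y)\bigr) \;\geq\; F_X(x)\,F_X(z-y).
\end{align*}

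Finally, I would take expectation in $Y$ on both sides. The left side becomes $\mathbb{P}(X\leq x,\,X+Y\leq z)$ by the tower property, and the right side becomes $F_X(x)\,\mathbb{E}_Y[F_X(z-Y)]=\mathbb{P}(X\leq x)\,\mathbb{P}(X+Y\leq z)$, which is precisely the PQD inequality. There is no real obstacle here: once independence lets us strip out $Y$, the bound is just the fact that two monotone (in the same direction) events in $X$ are positively associated. Alternatively, one can appeal directly to Chebyshev's association inequality applied to the non-increasing functions $t\mapsto \mathbf{1}\{t\leq x\}$ and $t\mapsto F_Y(z-t)$ of $X$, but the conditioning argument above is the shortest route.
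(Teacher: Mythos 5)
Your proof is correct, and it takes a genuinely different route from the paper's. The paper conditions on the event $\{X\leq a\}$, writes $\mathbb{P}(X+Y\leq b\mid X\leq a)$ as a weighted average of $\mathbb{P}(Y\leq b-x)$ over $x\in(-\infty,a]$, and then differentiates this quotient of integrals with respect to $a$ to show the conditional probability is non-increasing in $a$, concluding by letting $a\to\infty$. That argument explicitly assumes $X$ and $Y$ admit probability density functions and hinges on a sign analysis of the derivative. You instead condition on $Y=y$, which collapses the joint event to $\{X\leq\min(x,z-y)\}$, so the entire inequality reduces to $\min\bigl(F_X(x),F_X(z-y)\bigr)\geq F_X(x)F_X(z-y)$, i.e. $\min(a,b)\geq ab$ on $[0,1]$, followed by taking expectation in $Y$. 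This is shorter, needs no density or differentiability assumptions (it works verbatim for discrete or mixed distributions, whereas the paper restricts to variables with p.d.f.s), and makes transparent why the result holds: both events are monotone decreasing in $X$. Your alternative via Chebyshev's association inequality applied to $t\mapsto\mathbf{1}\{t\leq x\}$ and $t\mapsto F_Y(z-t)$ is also valid and is essentially the same observation packaged differently. The one step worth stating explicitly is that independence is what guarantees the conditional law of $X$ given $Y=y$ equals the unconditional law $F_X$; you use this implicitly in your first display.
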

The proof is listed in the appendix. Next, we state the following result without proof:
\begin{lemma}\label{monotonic} \cite[Theorem $2.4$]{positivequad1}
Monotonic Functions of PQD random variables are positively correlated.
\end{lemma}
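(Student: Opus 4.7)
The plan is to split the argument into two independent steps: first, that the PQD property is preserved under coordinate-wise monotone transformations; and second, that any PQD pair has non-negative covariance. This decomposition is clean because both halves are purely measure-theoretic and neither requires any structural information about the flow network.

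For the first step, assume without loss of generality that $f, g$ are both non-decreasing --- the case when both are non-increasing reduces to this by replacing them with $-f, -g$, which leaves $\mathrm{Cov}(f(X), g(Y))$ unchanged. For any thresholds $u, v$, the sublevel sets $\{x : f(x) \leq u\}$ and $\{y : g(y) \leq v\}$ are lower half-lines of the form $(-\infty, c_u]$ and $(-\infty, d_v]$, with $c_u := \sup\{x : f(x) \leq u\}$ and $d_v$ defined analogously (possibly $\pm\infty$); this is the defining feature of non-decreasing functions. Applying the PQD inequality for $(X, Y)$ at these thresholds then yields
\begin{align*}
\mathbb{P}(f(X)\leq u,\, g(Y)\leq v) &= \mathbb{P}(X\leq c_u,\, Y\leq d_v)\\
&\geq \mathbb{P}(X\leq c_u)\,\mathbb{P}(Y\leq d_v)\\
&= \mathbb{P}(f(X)\leq u)\,\mathbb{P}(g(Y)\leq v),
\end{align*}
so $(f(X), g(Y))$ is itself PQD.

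For the second step, invoke Hoeffding's covariance identity: whenever $U, V$ are square-integrable,
$$\mathrm{Cov}(U, V) = \int_{\mathbb{R}^2}\bigl[\mathbb{P}(U\leq u, V\leq v) - \mathbb{P}(U\leq u)\,\mathbb{P}(V\leq v)\bigr]\,du\,dv.$$
Plugging in $U = f(X)$, $V = g(Y)$, the bracketed integrand is pointwise non-negative by the first step, so the integral --- and hence $\mathrm{Cov}(f(X), g(Y))$ --- is non-negative, which is precisely the claim.

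The main obstacle is conceptual rather than computational: bridging from a pointwise inequality on joint CDFs (an event-level statement) to an inequality on $\mathbb{E}[f(X)g(Y)] - \mathbb{E}[f(X)]\mathbb{E}[g(Y)]$ (an expectation-level statement). Hoeffding's identity is exactly this bridge, representing the covariance as the integral of the ``PQD defect.'' Minor technical care is needed for plateaus of $f, g$ (handled by right-continuity of CDFs) and for verifying that $f(X)$ and $g(Y)$ are square-integrable under the implicit regularity assumed in the statement, but these points are routine.
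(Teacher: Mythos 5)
Your proof is correct, but note that the paper itself offers no proof of this lemma --- it is stated explicitly ``without proof'' and attributed to Theorem~2.4 of the cited reference --- so there is no in-paper argument to compare against. Your two-step decomposition (PQD is closed under coordinate-wise monotone maps in the same direction; PQD implies non-negative covariance via Hoeffding's identity $\mathrm{Cov}(U,V)=\int\int[\mathbb{P}(U\leq u,V\leq v)-\mathbb{P}(U\leq u)\mathbb{P}(V\leq v)]\,du\,dv$) is exactly the classical route by which this result is established in the quadrant-dependence literature, so you have in effect supplied the omitted external proof. Two small points of care beyond what you flagged: first, for a non-decreasing $f$ the sublevel set $\{x:f(x)\leq u\}$ is a down-set but may be open at its right endpoint, i.e.\ of the form $(-\infty,c_u)$ rather than $(-\infty,c_u]$; the PQD inequality at these thresholds then follows by applying the closed-threshold inequality at $c_u-1/n$ and letting $n\to\infty$, rather than by right-continuity of the CDF. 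Second, the lemma as phrased (``monotonic functions'') implicitly requires both functions to be monotone in the \emph{same} direction --- one increasing and one decreasing would flip the sign --- which your WLOG silently assumes; this is consistent with how the paper uses the lemma (all flow functions are increasing), but is worth stating.
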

Using this we deduce the following result.
\begin{theorem}\label{covarflow}
Let ${\cal V}_1 \subset {\cal V}_2$ be nonempty sets of nodes in $\cal T$. Let $P_{{\cal V}_1} =\sum_{a \in {\cal V}_1}P_a$ and $P_{{\cal V}_2} =\sum_{a \in {\cal V}_2}P_a$. Then for any two flow functions $g_i$ and $g_j$, $g_i(P_{{\cal V}_1})$ and $g_j(P_{{\cal V}_2})$ are positively correlated.
\end{theorem}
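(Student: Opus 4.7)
The plan is to reduce the claim to a direct application of Lemmas~\ref{PQDsum} and \ref{monotonic}, where the only real work is choosing the correct decomposition of $P_{{\cal V}_2}$. Specifically, since ${\cal V}_1 \subseteq {\cal V}_2$, I would write
\begin{align*}
P_{{\cal V}_2} = P_{{\cal V}_1} + P_{{\cal V}_2 \setminus {\cal V}_1},
\end{align*}
which splits $P_{{\cal V}_2}$ into a sum of two terms involving disjoint index sets of nodal injections.

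Next I would invoke Assumption~$1$: because nodal injections at non-reference nodes are mutually independent, any sum $P_{\cal A}$ depends only on the injections indexed by ${\cal A}$, and sums over disjoint index sets are independent. In particular, $P_{{\cal V}_1}$ and $P_{{\cal V}_2 \setminus {\cal V}_1}$ are independent. Applying Lemma~\ref{PQDsum} with $X = P_{{\cal V}_1}$ and $Y = P_{{\cal V}_2 \setminus {\cal V}_1}$ then gives that $P_{{\cal V}_1}$ and $P_{{\cal V}_1} + P_{{\cal V}_2 \setminus {\cal V}_1} = P_{{\cal V}_2}$ are PQD. I would treat the degenerate case ${\cal V}_1 = {\cal V}_2$ separately (where $P_{{\cal V}_1} = P_{{\cal V}_2}$ and $Y \equiv 0$ is trivially independent of anything, so PQD still holds).

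Finally, since $g_i$ and $g_j$ are monotonic, Lemma~\ref{monotonic} immediately yields
\begin{align*}
\Omega_{g_i(P_{{\cal V}_1})\, g_j(P_{{\cal V}_2})} \geq 0,
\end{align*}
which is exactly the claimed positive correlation. The only subtle point, and the one to be careful about, is justifying that independence of the individual $P_a$'s transfers to independence of the two \emph{sums} $P_{{\cal V}_1}$ and $P_{{\cal V}_2 \setminus {\cal V}_1}$; this follows from the standard fact that measurable functions of disjoint collections of independent random variables are independent. With that observation in place, the theorem reduces to a one-line chain: decompose, apply Lemma~\ref{PQDsum} to get PQD, and apply Lemma~\ref{monotonic} to lift PQD through the monotone maps $g_i, g_j$.
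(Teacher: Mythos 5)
Your proposal is correct and follows essentially the same route as the paper's own proof: decompose $P_{{\cal V}_2}$ as $P_{{\cal V}_1}$ plus the sum over ${\cal V}_2 \setminus {\cal V}_1$, use independence of injections over disjoint node sets to invoke Lemma~\ref{PQDsum}, and then lift PQD through the monotone flow functions via Lemma~\ref{monotonic}. Your extra remarks on the degenerate case and on independence of sums over disjoint index sets are harmless refinements of the same argument.
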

\begin{proof}
$P_{{\cal V}_1}$ and $P_{{\cal V}_2} - P_{{\cal V}_1}$  are independent as ${\cal V}_1$ and ${\cal V}_2 - {\cal V}_1$ are disjoint sets and nodal injections are independent. Using Lemma \ref{PQDsum}, $P_{{\cal V}_1}$ and $P_{{\cal V}_2} =  P_{{\cal V}_1} + (P_{{\cal V}_2} - P_{{\cal V}_1})$ are PQD. The result follows from Lemma \ref{monotonic} as flow functions are monotonic.
\end{proof}
We now analyze trends in second moments of nodal potentials using the preceding result. Denote the variance of potential difference $\pi_a - \pi_b$ as $\phi_{ab}$. Using Eq.~(\ref{covar}) and Lemma \ref{telescopiclemma}, we write $\phi_{ab}$ as follows:
\begin{align}
\phi_{ab} &= \mathbb{E}[\pi_a-\pi_b - (\mu_{\pi_a}-\mu_{\pi_b})]^2\nonumber\\
&= \smashoperator[lr]{\sum_{(jk),(st) \in {\cal P}_b^{{\cal T}} - {\cal P}_a^{{\cal T}}}}\Omega_{g_{jk}(f_{jk})g_{st}(f_{st})}+ \smashoperator[r]{\sum_{(jk),(st) \in {\cal P}_a^{{\cal T}} - {\cal P}_b^{{\cal T}}}}\Omega_{g_{jk}(f_{jk})g_{st}(f_{st})} \nonumber\\
&~-  2\smashoperator[lr]{\sum_{(jk) \in {\cal P}_b^{{\cal T}} - {\cal P}_a^{{\cal T}}, (st)\in {\cal P}_a^{{\cal T}} - {\cal P}_b^{{\cal T}}}}\Omega_{g_{jk}(f_{jk})g_{st}(f_{st})} \label{phicovar1}
\end{align}
If $a$ is a descendant of $b$, ${\cal P}_b^{{\cal T}} \subset {\cal P}_a^{{\cal T}}$. Thus, ${\cal P}_b^{{\cal T}} - {\cal P}_a^{{\cal T}}$ is empty and  Eq.~(\ref{phicovar1}) reduces to
\begin{align}
\phi_{ab} &= \smashoperator[lr]{\sum_{(jk),(st) \in {\cal P}_a^{{\cal T}} - {\cal P}_b^{{\cal T}}}}\Omega_{g_{jk}(f_{jk})g_{st}(f_{st})}\label{phicovar}
\end{align}

Note that using Eq.~(\ref{flowinv}), we can express flows on the right side of Eq.~(\ref{phicovar}) and Eq.~(\ref{phicovar1}) in terms of the injections at descendant nodes. The following theorem states a key trend that is observed in $\phi_{ab}$ for a radial flow network.

\begin{theorem} \label{theoremcases}
Consider three nodes $a \neq b \neq c$ in the radial flow network such that the path from $a$ to $c$ passes through $b$. The variance of potential differences $\phi$ satisfies $\phi_{ab} < \phi_{ac}$.
\end{theorem}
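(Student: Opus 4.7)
My strategy is to decompose both $\phi_{ab}$ and $\phi_{ac}$ into sums of variances of mutually independent pieces, then compare the pieces. Throughout, let $k_{uv}$ denote the least common ancestor of nodes $u$ and $v$ in ${\cal T}$.

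Applying Lemma~\ref{telescopiclemma}, I can write $\pi_a-\pi_b=S_{ab}-T_{ab}$, where $S_{ab}=\sum_{e\in{\cal P}_b^{{\cal T}}-{\cal P}_a^{{\cal T}}}g_e(f_e)$ sums over the edges on the sub-path from $b$ to $k_{ab}$ and $T_{ab}=\sum_{e\in{\cal P}_a^{{\cal T}}-{\cal P}_b^{{\cal T}}}g_e(f_e)$ sums over those from $a$ to $k_{ab}$. By Eq.~(\ref{flowinv}) every flow in $S_{ab}$ depends only on injections in the subtree rooted at the $b$-side child of $k_{ab}$, while every flow in $T_{ab}$ depends only on injections in the subtree rooted at the $a$-side child of $k_{ab}$; since these subtrees are disjoint, Assumption~1 makes $S_{ab}$ and $T_{ab}$ independent and hence $\phi_{ab}=\mathrm{Var}(S_{ab})+\mathrm{Var}(T_{ab})$. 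The analogous decomposition holds for $\phi_{ac}$.

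Next I would split into two symmetric cases based on the location of $b$ on the $a$-$c$ path. Suppose first that $b$ lies on the ascent from $a$ to $k_{ac}$ (including the possibility $b=k_{ac}$). Then $k_{ab}=b$, so $S_{ab}=0$ and $T_{ab}$ ranges over exactly the edges from $a$ up to $b$, which form a subset of the edges summed in $T_{ac}$. Writing $T_{ac}=T_{ab}+R$ with $R=\sum_{e\in\text{path from }b\text{ to }k_{ac}}g_e(f_e)$, the expansion $\mathrm{Var}(T_{ac})=\mathrm{Var}(T_{ab})+\mathrm{Var}(R)+2\,\mathrm{Cov}(T_{ab},R)$ holds, and for any $(jk)$ contributing to $T_{ab}$ and $(st)$ contributing to $R$ the child-end descendant sets satisfy $D_j^{\cal T}\subseteq D_b^{\cal T}\subseteq D_s^{\cal T}$. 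By Theorem~\ref{covarflow} this makes $\mathrm{Cov}(T_{ab},R)\geq 0$, so $\phi_{ac}-\phi_{ab}\geq \mathrm{Var}(R)+\mathrm{Var}(S_{ac})$, which is strictly positive because $b\neq c$ forces at least one of the sub-paths from $b$ to $k_{ac}$ or from $k_{ac}$ to $c$ to carry an edge with non-degenerate flow. The complementary case in which $b$ lies strictly on the descent from $k_{ac}$ to $c$ is handled by the mirror argument with the roles of $S$ and $T$ interchanged.

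The main obstacle is the configuration-dependent bookkeeping: for each possible location of $b$ I must identify which edges land in $S$ versus $T$, verify that the additional edges appearing when passing from the $a$-$b$ path to the $a$-$c$ path all sit either above $b$ or below $b$ so that their child-end descendant sets are nested with those of the already-counted edges, and only then can Theorem~\ref{covarflow} be invoked to conclude non-negativity of every cross-covariance produced by the variance expansion. Once this nesting is in place the strict inequality $\phi_{ab}<\phi_{ac}$ follows from the non-degeneracy of the flow along the extending sub-path.
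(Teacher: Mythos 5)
Your proof is correct, and it rests on the same three pillars as the paper's: Lemma~\ref{telescopiclemma} to write $\pi_a-\pi_b$ as a difference of two edge sums, Eq.~(\ref{flowinv}) to identify each flow with an aggregate injection over a nested descendant set, and Theorem~\ref{covarflow} to get non-negativity of every cross-covariance. The difference is organizational, and it is worth noting. The paper splits into three cases by ancestor relations ($c$ an ancestor of $b$ an ancestor of $a$; the reverse chain; or $b$ the common ancestor of $a$ and $c$) and expands $\phi_{ac}-\phi_{ab}$ directly as a sum of covariance terms via Eqs.~(\ref{phicovar1})--(\ref{phicovar}); you instead use the independence of the two arms meeting at the least common ancestor to write $\phi_{uv}=\mathrm{Var}(S_{uv})+\mathrm{Var}(T_{uv})$ and show that passing from $b$ to $c$ strictly enlarges one arm. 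Your two-case split (is $b$ on the ascent from $a$ to $k_{ac}$, or strictly on the descent from $k_{ac}$ to $c$?) is exhaustive, whereas the paper's three cases, read literally, omit the configurations in which $b$ lies strictly between $a$ (resp.\ $c$) and a least common ancestor distinct from $c$ (resp.\ $a$); those configurations are disposed of by the same computations, so this is a presentational gap in the paper that your organization avoids. Finally, both arguments lean on the same implicit non-degeneracy assumption for strictness: your closing claim that some edge on the path from $b$ to $c$ carries a flow with non-degenerate image under its flow function is exactly what the paper assumes when it asserts $\Omega_{g_{jk}(f_{jk})g_{st}(f_{st})}>0$ from Theorem~\ref{covarflow}.
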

\begin{proof}
As the path from $a$ to $c$ passes through $b$, there are three possible configurations for nodes $a$, $b$ and $c$.
\begin{enumerate}
\item $a$ is a descendant of node $b$, $b$ is a descendant of $c$ (see Fig.~\ref{fig:item1}).
\item $c$ is a descendant of node $b$, $b$ is a descendant of $a$ (see Fig.~\ref{fig:item3}).
\item $a$ and $c$ are descendants of node $b$ (see Fig.~\ref{fig:item2}).
\end{enumerate}
\begin{figure}[!bt]
\centering
\hspace*{\fill}
\subfigure[]{\includegraphics[width=0.1625\textwidth]{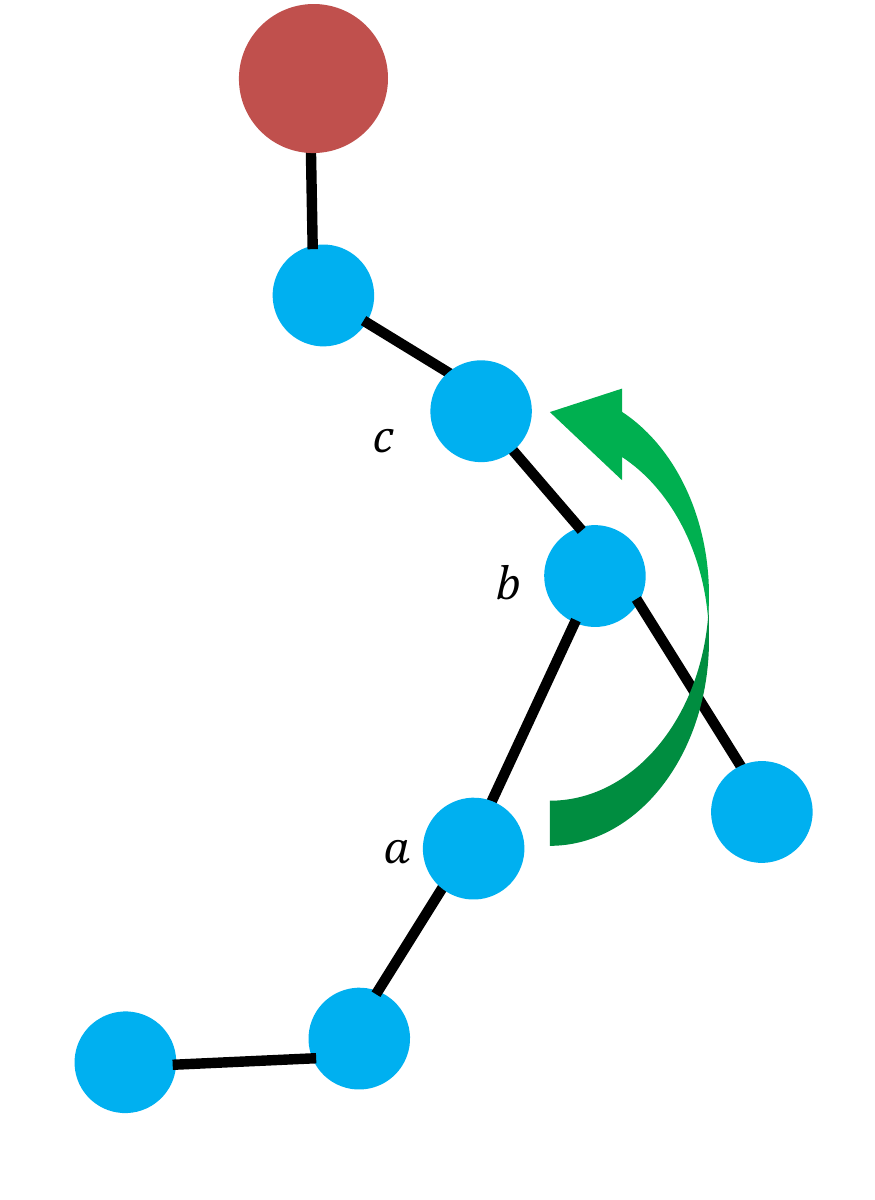}\label{fig:item1}}\hfill
\subfigure[]{\includegraphics[width=0.1625\textwidth]{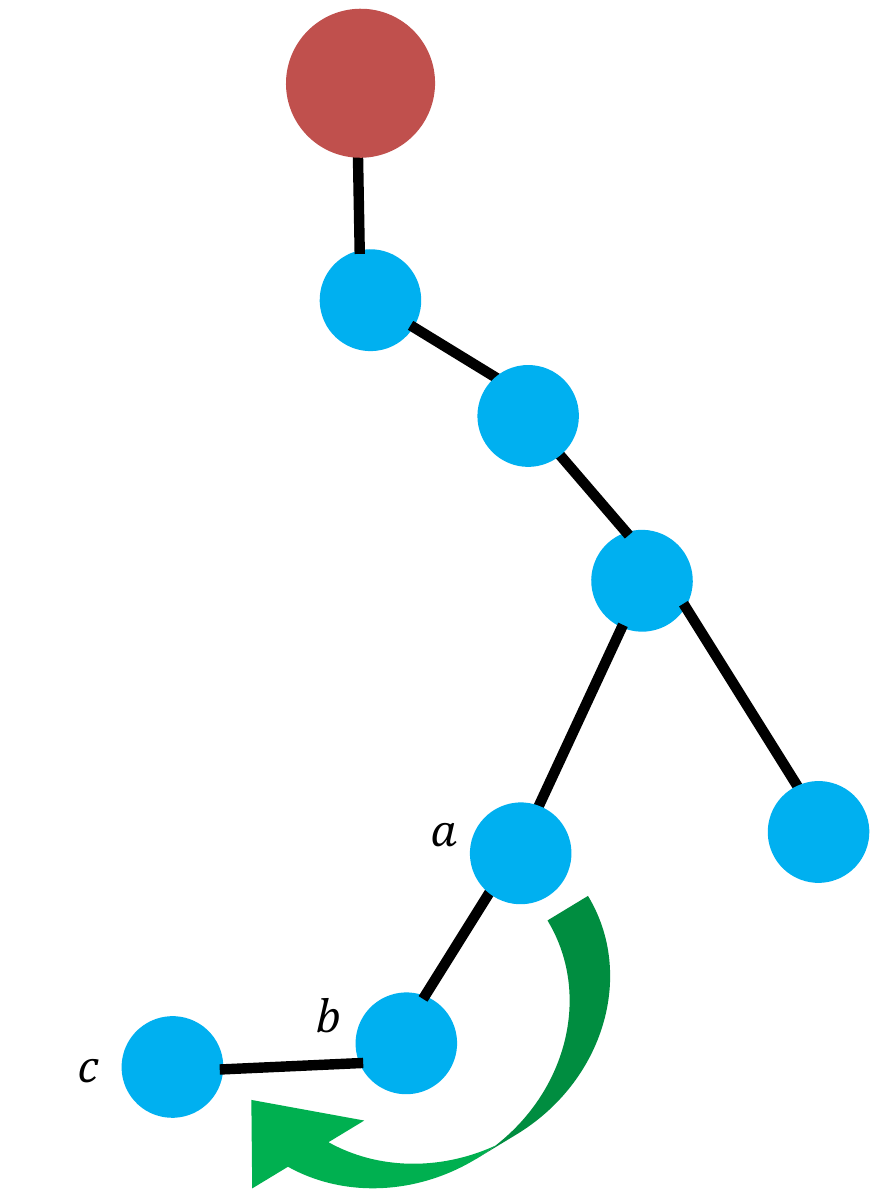}\label{fig:item3}}\hfill
\subfigure[]{\includegraphics[width=0.1559\textwidth]{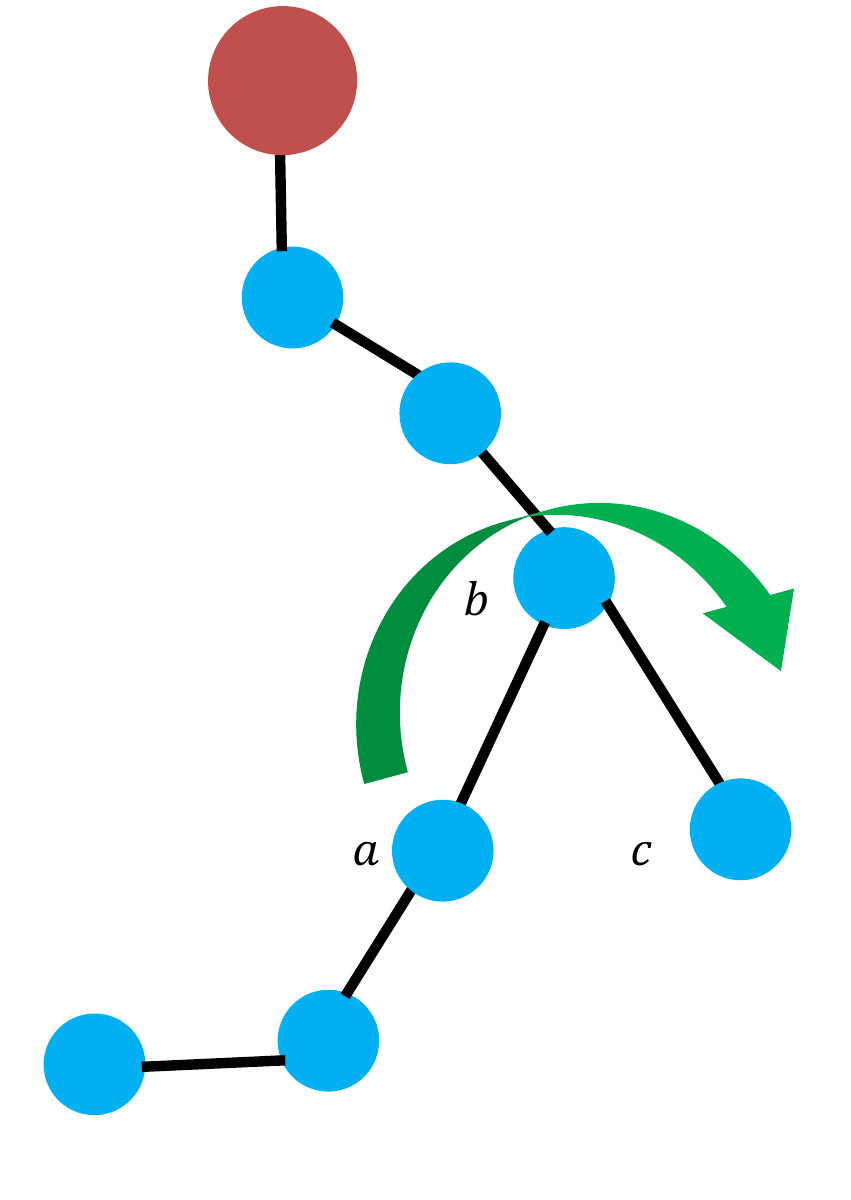}\label{fig:item2}}
\squeezeup
\hspace*{\fill}
\caption{Permissible configurations for nodes $a$, $b$ and $c$ when the path from $a$ to $c$ passes through $b$.
\label{fig:item}}
\end{figure}
To prove the theorem, we consider each case separately and prove the inequality $\phi_{ab} < \phi_{ac}$.

\textbf{Case $1$}: Note that ${\cal P}_c^{{\cal T}} \subset {\cal P}_b^{{\cal T}}\subset {\cal P}_a^{{\cal T}}$. Thus,
$${\cal P}_a^{{\cal T}}- {\cal P}_b ^{{\cal T}} \subset {\cal P}_a^{{\cal T}}- {\cal P}_c ^{{\cal T}}$$. Using this in expression for $\phi_{ab}$ and $\phi_{ac}$ in Eq.~(\ref{phicovar}) gives
\begin{align}
\phi_{ac} - \phi_{ab} = \smashoperator[lr]{\sum_{(jk),(st) \in {\cal P}_b^{{\cal T}} - {\cal P}_c^{{\cal T}}}}\Omega_{g_{jk}(f_{jk})g_{st}(f_{st})} + 2\smashoperator[lr]{\sum_{\substack{(jk) \in {\cal P}_a^{{\cal T}} - {\cal P}_b^{{\cal T}}\\ (st)\in {\cal P}_b^{{\cal T}} - {\cal P}_c^{{\cal T}}}}} \Omega_{g_{jk}(f_{jk})g_{st}(f_{st})} \label{case1formula}
\end{align}
Using Eq.~(\ref{flowinv}), $f_{jk} = \sum_{r \in D_j^{\cal T}} P_r$ and $f_{st} = \sum_{r \in D_s^{\cal T}} P_r$. Note that for any two edges (jk) and (st) in ${\cal P}_a^{{\cal T}} - {\cal P}_c^{{\cal T}}$, $D_j^{\cal T} \subset D_s^{\cal T}$  or $D_s^{\cal T} \subset D_j^{\cal T}$ depending on which node is topologically nearer to $a$. If $(jk) \in {\cal P}_a^{{\cal T}} - {\cal P}_b^{{\cal T}}$, $(st) \in {\cal P}_b^{{\cal T}} - {\cal P}_c^{{\cal T}}$ then $D_s^{\cal T} \subset D_j^{\cal T}$. In either case, using Theorem \ref{covarflow}, we have
$\Omega_{g_{jk}(f_{jk})g_{st}(f_{st})} > 0$. Thus, all terms in Eq.~(\ref{case1formula}) are positive. Thus  $\phi_{ab} <\phi_{ac}$.

\textbf{Case $2$}: In this case, ${\cal P}_a^{{\cal T}} \subset {\cal P}_b^{{\cal T}}\subset {\cal P}_c^{{\cal T}}$. Following the analysis of Case $1$, the expression of $\phi_{ac} - \phi_{ab}$ here becomes
$$\phi_{ac} - \phi_{ab} = \smashoperator[lr]{\sum_{(jk),(st) \in {\cal P}_c^{{\cal T}} - {\cal P}_b^{{\cal T}}}}\Omega_{g_{jk}(f_{jk})g_{st}(f_{st})} + 2\smashoperator[lr]{\sum_{\substack{(jk) \in {\cal P}_c^{{\cal T}} - {\cal P}_b^{{\cal T}}\\ (st)\in {\cal P}_b^{{\cal T}} - {\cal P}_a^{{\cal T}}}}} \Omega_{g_{jk}(f_{jk})g_{st}(f_{st})} $$
Using the same logic as Case $1$, all covariance terms are positive valued and hence $\phi_{ab}< \phi_{ac}$.

\textbf{Case $3$}: From Fig.~\ref{fig:item2} it is clear that the common edges on paths from nodes $a$ and $c$ to the reference node are the ones on the path from node $b$ to the reference node. Thus ${\cal P}_a^{{\cal T}}-{\cal P}_b^{{\cal T}} = {\cal P}_a^{{\cal T}}-{\cal P}_c^{{\cal T}}$ and ${\cal P}_c^{{\cal T}}-{\cal P}_a^{{\cal T}} = {\cal P}_c^{{\cal T}} - {\cal P}_b^{{\cal T}}$. Further, for any edge $(jk)$ in ${\cal P}_a^{{\cal T}}-{\cal P}_b^{{\cal T}}$ and $(st)$ in ${\cal P}_c^{{\cal T}}-{\cal P}_b^{{\cal T}}$, their respective descendant sets $D_j^{\cal T}$ and $D_s^{\cal T}$ are disjoint. By Eq.~(\ref{flowinv}), flows $f_{jk}$ and $f_{st}$ are independent. The expression for $\phi_{ac}$ using Eq.~(\ref{phicovar1}) reduces to
\squeezeup
\begin{align}
\phi_{ac} &= \smashoperator[lr]{\sum_{(jk),(st) \in {\cal P}_a^{{\cal T}} - {\cal P}_b^{{\cal T}}}}\Omega_{g_{jk}(f_{jk})g_{st}(f_{st})} + \smashoperator[lr]{\sum_{(jk),(st) \in {\cal P}_c^{{\cal T}} - {\cal P}_b^{{\cal T}}}}\Omega_{g_{jk}(f_{jk})g_{st}(f_{st})}=\phi_{ab} + \phi_{cb}\label{case3formula}
\end{align}
where Eq.~(\ref{case3formula}) follows from Eq.~(\ref{phicovar}). As $\phi_{cb} > 0$ , $\phi_{ab} < \phi_{ac}$.
Thus, the statement holds as  it is true for all configurations of $a,b$ and $c$ in the network.
\end{proof}
In the next section, we use Theorem \ref{theoremcases} to design our structure learning algorithm.

\section{Structure Learning with Full Observation}
\label{sec:algo1}
The following theorem follows naturally from Theorem \ref{theoremcases}
\begin{theorem}\label{main}
The set of operational edges in the radial flow network $\cal T$ is given by the minimum spanning tree for the loopy graph of all permissible edges where each permissible edge $(ab)$ is given weight $\phi_{ab} = \mathbb{E}[\pi_a-\pi_b-(\mu_{\pi_a}-\mu_{\pi_b})]^2$.
\end{theorem}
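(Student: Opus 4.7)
The plan is to invoke the cycle property of minimum spanning trees: a spanning tree $\mathcal{T}$ of a weighted graph is the unique MST if and only if, for every non-tree edge $e$, every edge on the unique tree-path joining the endpoints of $e$ is strictly lighter than $e$. Since $\mathcal{T}$ is already a spanning tree of the loopy graph $\mathcal{G}$ of permissible edges, it suffices to verify this strict cycle-condition for every candidate edge in $\mathcal{E}\setminus\mathcal{E}_{\mathcal{T}}$. Two easy observations are free: first, from its definition $\phi_{ab}=\mathbb{E}[(\pi_a-\pi_b-(\mu_{\pi_a}-\mu_{\pi_b}))^2]$ is symmetric in its two indices, so I may freely swap endpoints; second, for any non-tree edge $(ac)$ the $a$-to-$c$ path in $\mathcal{T}$ has at least two edges (otherwise $(ac)$ would already be a tree edge).

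The key claim is: for every $(ac)\in\mathcal{E}\setminus\mathcal{E}_{\mathcal{T}}$ and every edge $(xy)$ on the $a$-to-$c$ path in $\mathcal{T}$ (with $x$ closer to $a$ and $y$ closer to $c$), one has $\phi_{xy}<\phi_{ac}$. I would argue this by a brief case analysis on whether $x$ or $y$ is an endpoint of the path: (i) if $x=a$, then $y$ lies strictly between $a$ and $c$ on the tree-path, and Theorem~\ref{theoremcases} applied to the triple $(a,y,c)$ yields $\phi_{ay}<\phi_{ac}$; (ii) if $y=c$, then $x$ lies strictly between $c$ and $a$, and Theorem~\ref{theoremcases} applied to $(c,x,a)$ gives $\phi_{cx}<\phi_{ca}$, which by symmetry of $\phi$ is $\phi_{xc}<\phi_{ac}$; (iii) if $(xy)$ is interior, then $y$ is on the $x$-to-$c$ path, so Theorem~\ref{theoremcases} gives $\phi_{xy}<\phi_{xc}$, while $x$ is on the $c$-to-$a$ path, so (applying Theorem~\ref{theoremcases} again and using the symmetry $\phi_{cx}=\phi_{xc}$, $\phi_{ca}=\phi_{ac}$) we get $\phi_{xc}<\phi_{ac}$; chaining these two inequalities finishes the case.

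With the claim in hand, the theorem follows by a standard exchange argument. Given any spanning tree $T'\neq\mathcal{T}$ of $\mathcal{G}$, pick an edge $(ac)\in T'\setminus\mathcal{E}_{\mathcal{T}}$; deleting $(ac)$ from $T'$ separates it into two components, and the $a$-to-$c$ path in $\mathcal{T}$ must contain at least one edge $(xy)$ whose endpoints lie in different components, so $T'':=(T'\setminus\{(ac)\})\cup\{(xy)\}$ is again a spanning tree. By the claim, the total weight of $T''$ is strictly smaller than that of $T'$. Iterating this swap eliminates all non-$\mathcal{T}$ edges while strictly decreasing the total weight at each step; the process terminates at $\mathcal{T}$, proving that $\mathcal{T}$ uniquely minimizes the spanning-tree weight.

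The main obstacle is really the key claim, and inside it the interior case (iii); the point that needs care is that Theorem~\ref{theoremcases} requires the middle node to lie strictly on the path between the two outer nodes, so one must exploit the fact that $(xy)$ being an edge of the tree-path guarantees $x$ is between $a$ and $c$ and $y$ is between $x$ and $c$. The symmetry $\phi_{uv}=\phi_{vu}$, which is manifest from the squared definition, is what lets me apply Theorem~\ref{theoremcases} from either endpoint and chain the two resulting inequalities; once these alignment issues are laid out, the rest of the argument is a routine MST exchange.
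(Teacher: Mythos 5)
Your proof is correct, and it is in fact more complete than the one the paper gives. The paper's argument is a two-sentence sketch: for each node $a$, the tree path to any other node $c$ passes through a tree-neighbor $b$ of $a$, so by Theorem~\ref{theoremcases} the operational edges incident to $a$ are the lightest permissible edges at $a$; from this local nearest-neighbor minimality the paper immediately asserts that $\cal T$ is the minimum spanning tree. As a purely graph-theoretic implication that final step is too weak --- per-node minimality of incident tree-edge weights does not in general force a spanning tree to be the MST --- so the paper is implicitly leaning on the stronger ordering that Theorem~\ref{theoremcases} provides along entire paths. That stronger ordering is exactly what you make explicit: by applying Theorem~\ref{theoremcases} from both endpoints of a non-tree edge and chaining through the symmetry $\phi_{uv}=\phi_{vu}$, you establish the strict cycle property (every edge on the tree path joining the endpoints of a non-tree permissible edge is strictly lighter than that edge), after which the standard exchange argument yields minimality, with uniqueness of the MST as a bonus. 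Your case analysis is sound, including the point you flag yourself: Theorem~\ref{theoremcases} requires the middle node to be distinct from the two outer nodes, which is guaranteed because the endpoints of a non-tree permissible edge are at tree distance at least two. In short, you use the same underlying tool as the paper, but your route through the cycle property is the rigorous version of what the paper only gestures at.
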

\begin{proof}
For each node $a$, the path to all nodes in the operational tree passes through one of its nearest neighbors (parent and children nodes). Using Theorem \ref{theoremcases}, weight $\phi_{ab}$ is, thus, minimum at the nearest neighbors in the tree. The spanning tree constructed using the operational edges thus has the minimum weight among all spanning trees formed from the set of permissible edges.
\end{proof}

\textbf{Algorithm $1$:} The algorithm for constructing the operational network is now straight forward. Using measurements for nodal potentials, permissible edges $(ab)$ are given weights $\phi_{ab}$ and a spanning tree is constructed greedily by picking edges in the increasing order of their edge weights, while avoiding cycles. This is known as Krushkal's algorithm  \cite{kruskal1956shortest,Cormen2001}. If no information on permissible edges is available, then all potential node pairs are considered as permissible and the spanning tree is constructed from the complete graph (every node pair is connected). Note that no information on flow function or nature of probability distribution for individual nodal injections are necessary in Algorithm $1$.

\begin{algorithm}
\caption{Structure Learning using Potential Measurements}
\textbf{Input:} $m$ potential measurements $\pi$ for all nodes, set of all permissible edges $\cal E$ (optional).\\
\textbf{Output:} Operational Edge set ${\cal E}_{\cal T}$.
\begin{algorithmic}[1]
\State Compute $\phi_{ab} = \mathbb{E}[(\pi_a-\mu_{\pi_a}) -(\pi_b-\mu_{\pi_b})]^2$ for all permissible edges
\State Find minimum weight spanning tree from $\cal E$ with $\phi_{ab}$ as edge weights.
\State ${\cal E}_{\cal T} \gets $ {spanning tree edges}
\end{algorithmic}
\end{algorithm}

\textbf{Algorithm Complexity:} Kruskal's Algorithm learns the minimum spanning tree in quasi-linear time in the number of permissible edges in the system. The computational complexity of learning the operational tree is $O(|{\cal E}|\log |{\cal E|})$ where $\cal E$ is the set of all permissible edges. If no information on set $\cal E$ is available, then the complexity (worst-case) becomes $O(|{\cal V}|^2\log |{\cal V}|)$ which is quasi-quadratic in $|{\cal V}|$, the number of nodes in the network.

We now discuss a few extensions of our algorithm to generalized cases.

\textbf{Extension to Multiple Trees}: Our learning algorithm and analysis can be immediately extended to networks with multiple operational trees. In each tree, one can denote one reference node and compute potentials relative to that. Potentials at multiple trees will be uncorrelated and can be separated into different groups before running Algorithm $1$.

\textbf{Learning Flow Functions/Statistics of Nodal Injection}: Note that in our algorithm, no information on flow functions or nodal consumption statistics is necessary. However, if either one of them is known (flow function or statistics of injection) in addition to potential measurements, the other one can be estimated. To obtain this, Algorithm $1$ is first used to learn the structure of the grid and then Eqs.~(\ref{flow}), (\ref{flowinv}) can be used to recursively estimate the flow function or the injection statistics from the leaves up to the reference node.

\textbf{Learning with Missing Nodes}: This refers to the regime where a section of the nodes are measured and potential measurements for others are not available. To learn the structure in the presence of missing nodes, we need additional information pertaining to permissible flow functions and nodal injection statistics. In that case, a modification of Algorithm $1$ can be proposed, where the available measurements of potentials are used to generate a spanning tree without the missing nodes. At the next level, Eqs.~(\ref{phicovar1}) and Eqs.~(\ref{phicovar}) can be used recursively to identify the presence of missing nodes. We plan to expand on this aspect in a future work.

\section{Experiments}
\label{sec:experiments}
In this section, we discuss the performance of Algorithm $1$ in learning the operational radial structure of flow networks using nodal potential measurements as input. To demonstrate the general nature of our work, we present simulation results on two radial networks: a power distribution grid with linear flow function (Fig.~\ref{fig:powercase}), and a gas transmission grid with quadratic flow function (Fig.~\ref{fig:gascase}). The power distribution grid \cite{testcase2,radialsource} consists of $30$ nodes, while the gas grid \cite{zlotnik2015optimal} consists of $25$ nodes. One node is denoted as the reference node with constant potential.

\begin{figure}[!bt]
\centering
\subfigure[]{\includegraphics[width=0.35\textwidth,height = .18\textwidth]{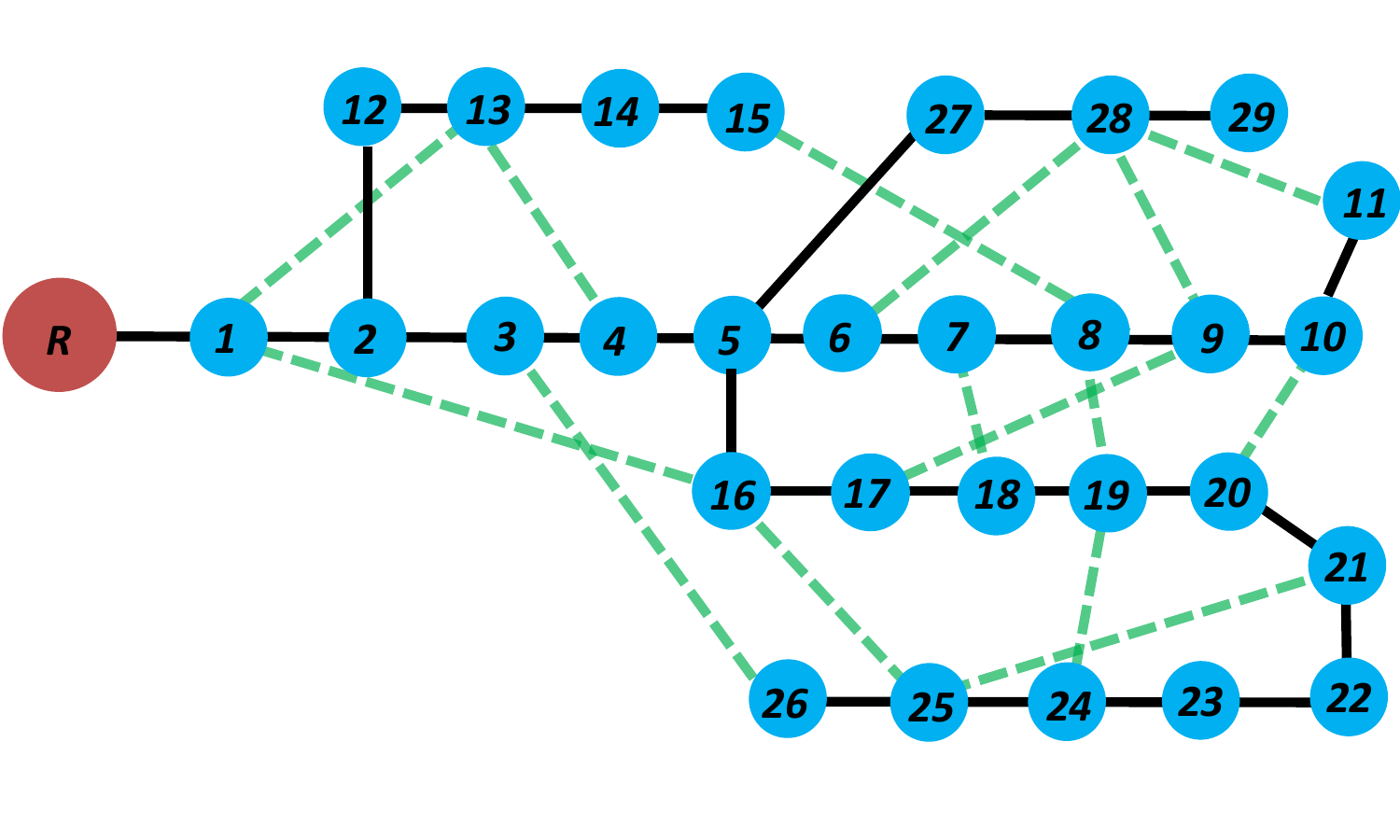}\label{fig:powercase}}
\squeezeup
\subfigure[]{\includegraphics[width=0.34\textwidth,height = .16\textwidth]{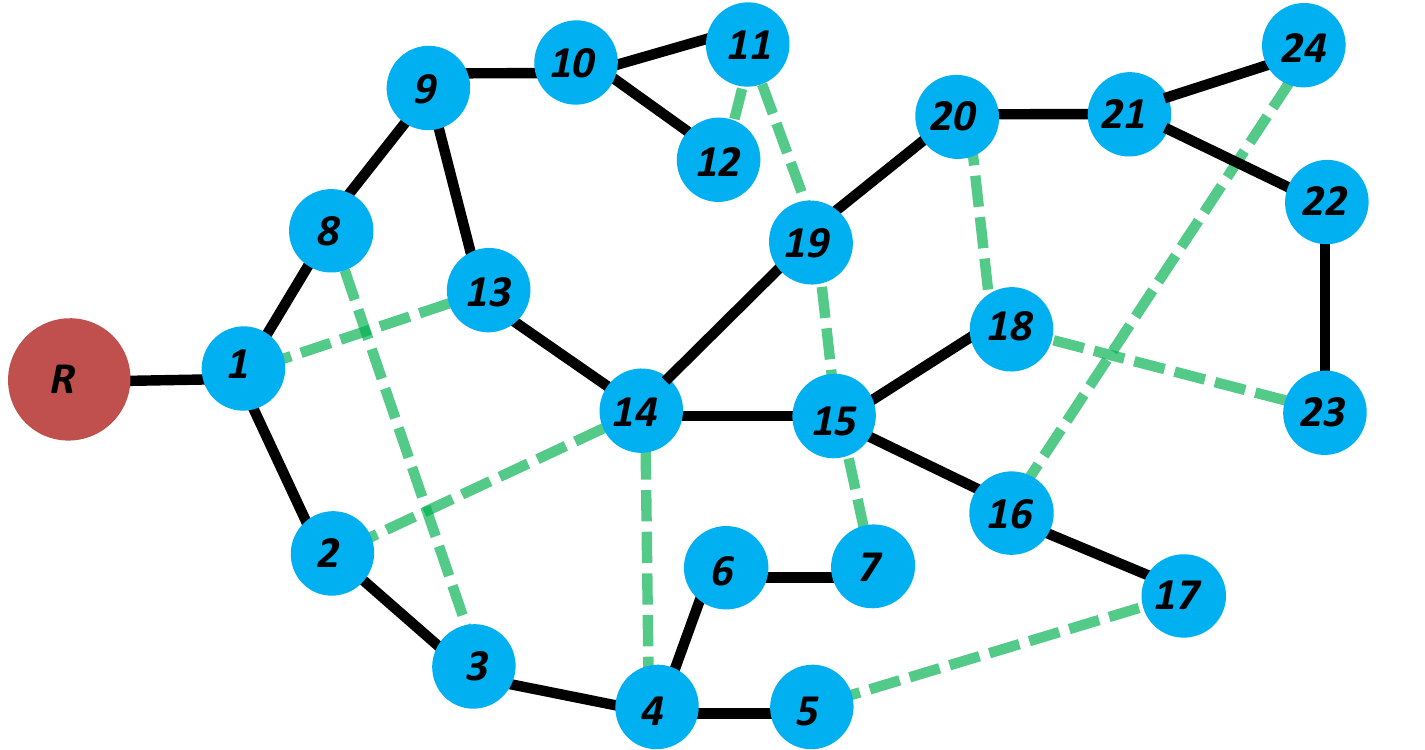}\label{fig:gascase}}
\squeezeup
\caption{Layouts of the grids tested: (a) power distribution grid  \cite{testcase2,radialsource} (b) gas transmission grid \cite{zlotnik2015optimal}. The red and blue circles denote the reference and non-reference nodes respectively in either grid.  Operational edges are colored solid black while some of the fictitious non-operational edges are denoted by dotted green lines.}
\label{fig:case}
\vspace{-2mm}
\end{figure}
To conduct a simulation on either grid, we first generate injection samples at each non-reference node using a uncorrelated multivariate Gaussian distribution. Then flow equations (LinDisFlow Eqs.~(\ref{injpower},\ref{flowpower}) for power grid and Eqs.~(\ref{injgas},\ref{flowgas}) for gas grid) are used to derive input nodal potential measurements (squares of voltage magnitude for power grid and squares of pressures for gas grid). Further, fictitious edges (numbering $30$ for power grid and $25$ for gas grid) are introduced into the loopy set of permissible edges  ${\cal E}$ along with the true operational edges. This is done to observe the performance of structure estimation in Algorithm $1$. The potential measurements and set $\cal E$ are sent as input to Algorithm $1$.

We measure average errors produced in determining the true structure and express them relative to the number of operational edges. To demonstrate the performance for either grid, we plot the trend in average relative errors in Algorithm $1$ versus the number of nodal potential samples available as input. We first consider the case with no measurement noise in Fig.~\ref{fig:plotnonoise}. Notice that the performance is excellent and errors quickly decay to zero for both grids. In fact perfect recovery is observed for samples sizes greater than $100$. All errors here are induced by finite sample sizes that lead to imperfect empirical estimation of $\phi_{ab}$.

Next, we present performance of Algorithm $1$ when potential samples are corrupted with additive noise. We consider the potential samples at each node to suffer from additive Gaussian noise of mean $0$ and variance of value expressed as a fraction of the average variance of nodal potentials. In either grid, we consider three fractions ($8$x$10^{-2}, 5$x$10^{-2}, 10^{-1}$) to represent different levels of noise that are commensurate with noise suffered in off-the shelf measurement devices. Fig.~\ref{fig:plotpowernoise} and Fig.~\ref{fig:plotgasnoise} shows the performance with noise for the power and gas grids respectively. Note that the average fractional errors recorded for either grid go down with increase in the number of samples, though the decay is much slower than in Fig.~\ref{fig:plotnonoise} with no noise. Further, as expected, the errors increase with an increase in the noise variance. It can be observed that the error performance in power grid is significantly better than that in gas networks. This can be explained on the basis of quadratic flow functions in the latter which induce greater errors in empirical approximation of $\phi$ as compared to linear flow functions in the power distribution grid. We plan to theoretically analyze the error performance in detail in future work.

\begin{figure}[!bt]
\centering
\subfigure[]{\includegraphics[width=0.45\textwidth,height = .32\textwidth]{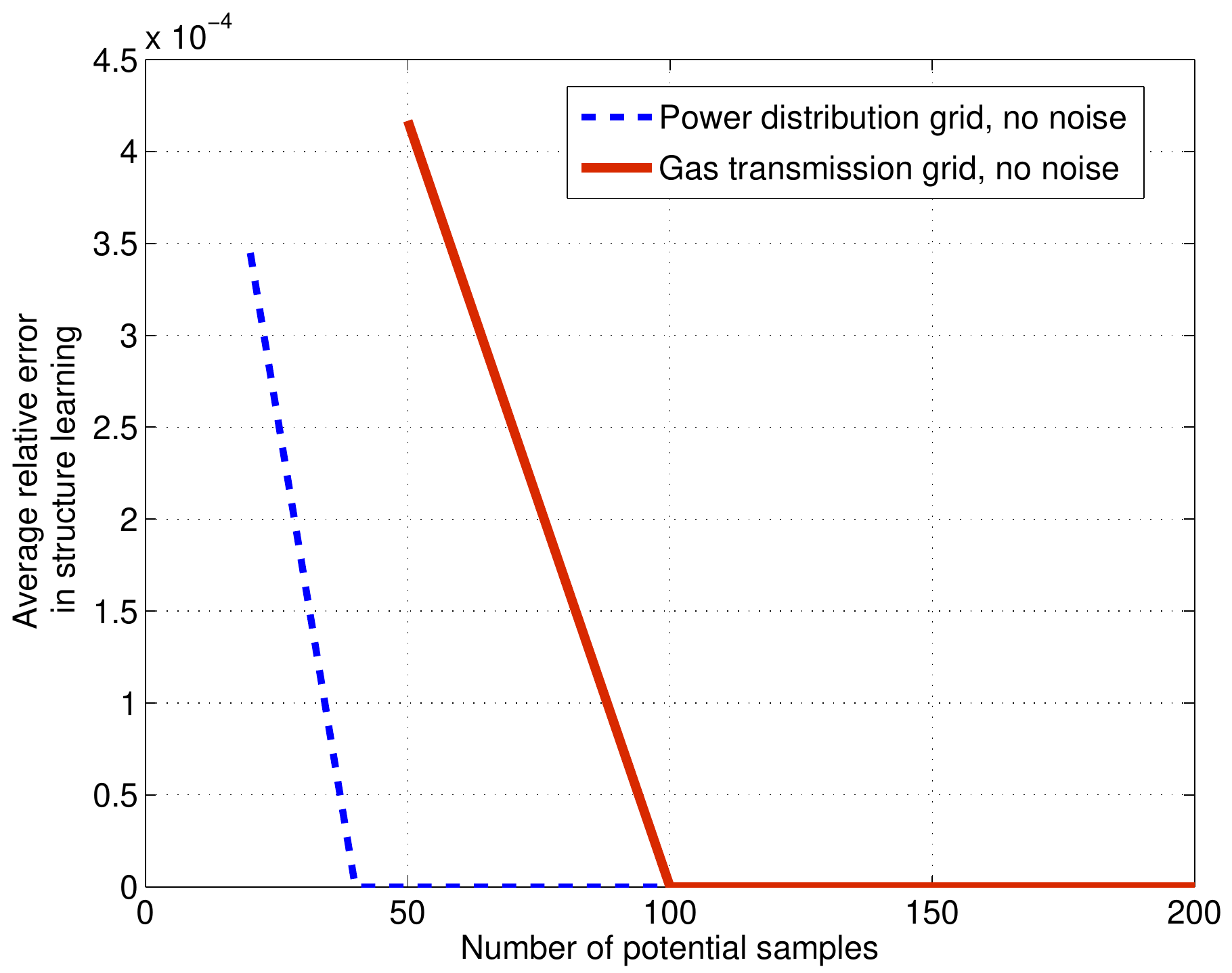}\label{fig:plotnonoise}}
\subfigure[]{\includegraphics[width=0.45\textwidth,height = .32\textwidth]{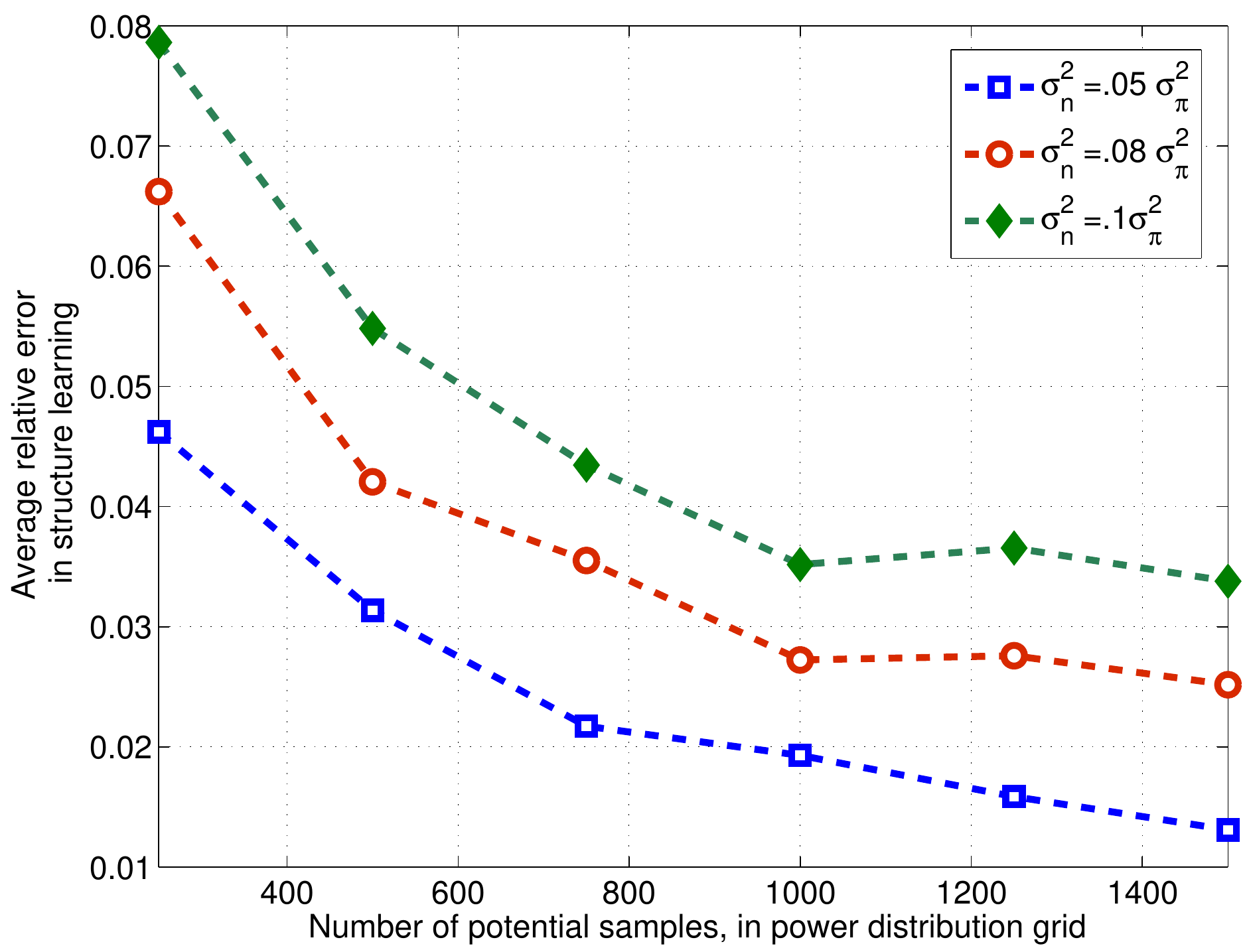}\label{fig:plotpowernoise}}
\subfigure[]{\includegraphics[width=0.45\textwidth,height = .32\textwidth]{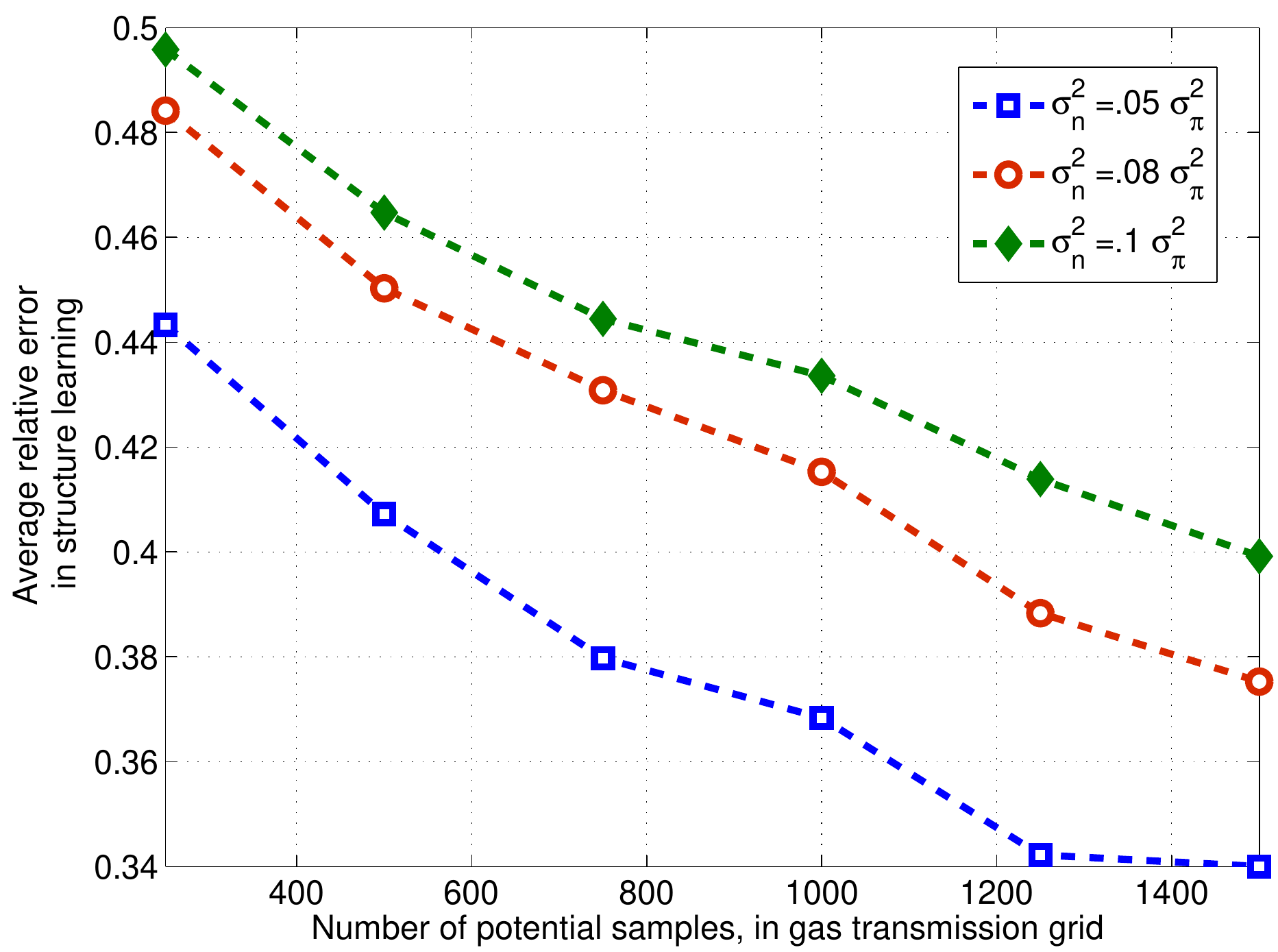}\label{fig:plotgasnoise}}
\squeezeup
\caption{Average fractional errors vs number of samples used in Algorithm $1$ for (a) Power distribution and gas transmission networks with no measurement noise (b) Power distribution network with Gaussian noise (c) Gas transmission network with Gaussian noise. Different noise variances $\sigma_n$ are taken as fractions ($.05,.08,.1$) relative to variance $\sigma_{\pi}$ in nodal potential.
\label{fig:algo1}}
\end{figure}

\section{Conclusions}
\label{sec:conclusions}
Flow networks represent several key infrastructure including power grid, gas grid, water and residential heating networks. Despite the diversity of network traffic, the flow in each network is driven by nodal potentials that are related to the edge flows by a class of nonlinear monotonic flow functions. This paper addresses the problem of estimating the structure of radial flow networks using measurements of nodal potentials. Using properties of positive quadrant dependent functions, we show that the variance of potential differences has provable ordering properties along the network edges. Based on this, a spanning tree based learning algorithm is proposed that can learn the network using only nodal potential statistics. The significant aspect of this algorithm is that it does not require any knowledge of the edge flow functions or specific marginal distributions of nodal injections. This work thus presents the first approach to learning general radial networks with nonlinear flows. The performance of our algorithms are demonstrated through simulations on test radial networks pertaining to a power system and a gas grid. We discuss extensions of our framework, including that with missing/unobserved nodes. Efficient learning of the network structure using potential measurements has application in control and optimization applications as well as in identifying the estimation capability of third parties possessed with limited information. In addition to the extensions mentioned in the paper, potential areas of future work include expanding the learning framework to lossy flows and loopy networks and understanding the sample complexity associated with learning in the presence of noisy measurements.
\appendix
\textbf{Proof of Lemma \ref{PQDsum}}: To prove $X$ and $X+Y$ are PQD, we need to show
\begin{align}
&\mathbb{P}(X+Y \leq b , X \leq a) \geq  \mathbb{P}(X \leq a)\mathbb{P}(X+Y \leq b) \forall a,b\nonumber\\
\Rightarrow~& \mathbb{P}(X+Y \leq b | X \leq a) \geq \mathbb{P}(X+Y \leq b)\label{condinq}
\end{align}
Since we are dealing with physical random variables (power injection etc.), we assume that their probability distribution functions exist. Let $\rho_{X}$ and $\rho_{Y}$ denote the probability distribution functions (p.d.f.s) for $X$ and $Y$ respectively. The p.d.f. for $X+Y$ conditioned on $X\leq a$ is given by:
\begin{align}
&\rho_{X+Y|X\leq a}(z) = \int_{-\infty}^{a}\rho_{X+Y|X=x}(z)\rho_{X|X\leq a}(x)dx \label{chainrule}\\
&~~~~~~~~~~~~~~~~~~~~~~=\frac{\int_{-\infty}^{a}\rho_{Y}(z-x)\rho_{X}(x)dx}{\int_{-\infty}^{a}\rho_{X}(x)dx}\label{ind}\\
\Rightarrow~&\mathbb{P}(X+Y \leq b | X \leq a)= \frac{\int_{-\infty}^{b}\int_{-\infty}^{a}\rho_{Y}(z-x)\rho_{X}(x)dxdz}{\int_{-\infty}^{a}\rho_{X}(x)dx}\nonumber\\
&~~~~~~~~~~~~~~~~~~~~~~= \frac{\int_{-\infty}^{a}\mathbb{P}(Y \leq b-x)\rho_{X}(x)dx}{\int_{-\infty}^{a}\rho_{X}(x)dx}\label{change}
\end{align}
Here, Eq.~(\ref{chainrule}) follows from the chain rule of conditional probability. Eq.~(\ref{ind}) uses the fact that the p.d.f. for $X+Y$ conditioned on $X =a$ is given by $\rho_{X+Y|X=a}(z) = \rho_{Y}(z-a)$ as $X$ and $Y$ are independent. Eq.~(\ref{change}) follows from changing the order of variables $x$ and $z$ under the integration. The right hand side of Eq.~(\ref{change}) represents the weighted average of $\mathbb{P}(Y \leq b-x)$ with weight $\frac{\rho_{X}(x)}{\int_{-\infty}^{a}\rho_{X}(x)dx}$ in $(\infty,a]$ and $0$ otherwise. The derivative of $\mathbb{P}(X+Y \leq b | X \leq a)$ with $a$ is non-positive as shown below:
\begin{align}
&\frac{d}{da}\frac{\int_{-\infty}^{a}\mathbb{P}(Y \leq b-x)\rho_{X}(x)dx}{\int_{-\infty}^{a}\rho_{X}(x)dx} \nonumber\\
&\propto  \mathbb{P}(X \leq a)\mathbb{P}(Y \leq b-a) - \int_{-\infty}^{a}\mathbb{P}(Y \leq b-x)\rho_{X}(x)dx\nonumber\\
&\leq \mathbb{P}(X \leq a)\mathbb{P}(Y \leq b-a) - \mathbb{P}(Y \leq b-a)\int_{-\infty}^{a}\rho_{X}(x)dx \leq 0
\end{align}
The inequality holds as $\mathbb{P}(Y \leq b-x)$ is a decreasing function of $x$.

Thus $\mathbb{P}(X+Y \leq b | X \leq a)$ is non-increasing in $a$ and hence proved that
$$\mathbb{P}(X+Y \leq b | X \leq a) \leq \lim_{a \rightarrow \infty}\mathbb{P}(X+Y \leq b | X \leq a) = \mathbb{P}(X+Y \leq b).$$
\section*{Acknowledgment}
The authors thank S. Misra and A. Zlotnik at Los Alamos National Laboratory for providing information regarding the test gas network used for simulations in the paper.
\bibliographystyle{IEEETran}
\bibliography{../Bib/FIDVR,../Bib/SmartGrid,../Bib/voltage,../Bib/trees}

\begin{thebibliography}{10}
\providecommand{\url}[1]{#1}
\csname url@samestyle\endcsname
\providecommand{\newblock}{\relax}
\providecommand{\bibinfo}[2]{#2}
\providecommand{\BIBentrySTDinterwordspacing}{\spaceskip=0pt\relax}
\providecommand{\BIBentryALTinterwordstretchfactor}{4}
\providecommand{\BIBentryALTinterwordspacing}{\spaceskip=\fontdimen2\font plus
\BIBentryALTinterwordstretchfactor\fontdimen3\font minus
  \fontdimen4\font\relax}
\providecommand{\BIBforeignlanguage}[2]{{%
\expandafter\ifx\csname l@#1\endcsname\relax
\typeout{** WARNING: IEEEtran.bst: No hyphenation pattern has been}%
\typeout{** loaded for the language `#1'. Using the pattern for}%
\typeout{** the default language instead.}%
\else
\language=\csname l@#1\endcsname
\fi
#2}}
\providecommand{\BIBdecl}{\relax}
\BIBdecl

\bibitem{dembo1989or}
R.~S. Dembo, J.~M. Mulvey, and S.~A. Zenios, ``Or practice—large-scale
  nonlinear network models and their application,'' \emph{Operations Research},
  vol.~37, no.~3, pp. 353--372, 1989.

\bibitem{hoffman2006practical}
R.~Hoffman, ``Practical state estimation for electric distribution networks,''
  in \emph{Power Systems Conference and Exposition, 2006. PSCE'06. 2006 IEEE
  PES}.\hskip 1em plus 0.5em minus 0.4em\relax IEEE, 2006, pp. 510--517.

\bibitem{distgridpart1}
D.~Deka, S.~Backhaus, and M.~Chertkov, ``Structure learning and statistical
  estimation in distribution networks - part i,'' \emph{arXiv preprint
  arXiv:1501.04131}, 2015.

\bibitem{gashandbook}
S.~Mokhatab and W.~A. Poe, \emph{Handbook of natural gas transmission and
  processing}.\hskip 1em plus 0.5em minus 0.4em\relax Gulf Professional
  Publishing, 2012.

\bibitem{misra2015optimal}
S.~Misra, M.~W. Fisher, S.~Backhaus, R.~Bent, M.~Chertkov, and F.~Pan,
  ``Optimal compression in natural gas networks: a geometric programming
  approach,'' \emph{Control of Network Systems, IEEE Transactions on}, vol.~2,
  no.~1, pp. 47--56, 2015.

\bibitem{aminwater}
L.~Sela~Perelman and S.~Amin, ``Control of tree water networks: A geometric
  programming approach,'' \emph{Water Resources Research}, vol.~51, no.~10, pp.
  8409--8430, 2015.

\bibitem{he2011dependency}
M.~He and J.~Zhang, ``A dependency graph approach for fault detection and
  localization towards secure smart grid,'' \emph{Smart Grid, IEEE Transactions
  on}, vol.~2, no.~2, pp. 342--351, 2011.

\bibitem{bolognani2013identification}
S.~Bolognani, N.~Bof, D.~Michelotti, R.~Muraro, and L.~Schenato,
  ``Identification of power distribution network topology via voltage
  correlation analysis,'' in \emph{Decision and Control (CDC), 2013 IEEE 52nd
  Annual Conference on}.\hskip 1em plus 0.5em minus 0.4em\relax IEEE, 2013, pp.
  1659--1664.

\bibitem{berkeley}
G.~Cavraro, R.~Arghandeh, A.~von Meier, and K.~Poolla, ``Data-driven approach
  for distribution network topology detection,'' \emph{arXiv preprint
  arXiv:1504.00724}, 2015.

\bibitem{sandia1}
J.~Peppanen, J.~Grimaldo, M.~J. Reno, S.~Grijalva, and R.~G. Harley,
  ``Increasing distribution system model accuracy with extensive deployment of
  smart meters,'' in \emph{PES General Meeting| Conference \& Exposition, 2014
  IEEE}.\hskip 1em plus 0.5em minus 0.4em\relax IEEE, 2014, pp. 1--5.

\bibitem{sandia2}
J.~Peppanen, M.~J. Reno, M.~Thakkar, S.~Grijalva, and R.~G. Harley,
  ``Leveraging ami data for distribution system model calibration and
  situational awareness,'' 2015.

\bibitem{ramstanford}
R.~Sevlian and R.~Rajagopal, ``Feeder topology identification,'' \emph{arXiv
  preprint arXiv:1503.07224}, 2015.

\bibitem{distgridpart2}
D.~Deka, S.~Backhaus, and M.~Chertkov, ``Structure learning and statistical
  estimation in distribution networks - part ii,'' \emph{arXiv preprint
  arXiv:1502.07820}, 2015.

\bibitem{distgrid_ecc}
------, ``Learning topology of the power distribution grid with and without
  missing data,'' \emph{arXiv preprint arXiv:1603.01650}, 2016.

\bibitem{vuffray2015monotonicity}
M.~Vuffray, S.~Misra, and M.~Chertkov, ``Monotonicity of dissipative flow
  networks renders robust maximum profit problem tractable: General analysis
  and application to natural gas flows,'' in \emph{Decision and Control (CDC),
  IEEE 52nd Annual Conference on}.\hskip 1em plus 0.5em minus 0.4em\relax IEEE,
  2015.

\bibitem{chow1968approximating}
C.~Chow and C.~Liu, ``Approximating discrete probability distributions with
  dependence trees,'' \emph{Information Theory, IEEE Transactions on}, vol.~14,
  no.~3, pp. 462--467, 1968.

\bibitem{choi2011learning}
M.~J. Choi, V.~Y. Tan, A.~Anandkumar, and A.~S. Willsky, ``Learning latent tree
  graphical models,'' \emph{The Journal of Machine Learning Research}, vol.~12,
  pp. 1771--1812, 2011.

\bibitem{abur2004power}
A.~Abur and A.~G. Exposito, \emph{Power system state estimation: theory and
  implementation}.\hskip 1em plus 0.5em minus 0.4em\relax CRC Press, 2004.

\bibitem{89BWa}
M.~Baran and F.~Wu, ``Optimal sizing of capacitors placed on a radial
  distribution system,'' \emph{Power Delivery, IEEE Transactions on}, vol.~4,
  no.~1, pp. 735--743, Jan 1989.

\bibitem{89BWb}
------, ``Optimal capacitor placement on radial distribution systems,''
  \emph{Power Delivery, IEEE Transactions on}, vol.~4, no.~1, pp. 725--734, Jan
  1989.

\bibitem{89BWc}
------, ``Network reconfiguration in distribution systems for loss reduction
  and load balancing,'' \emph{Power Delivery, IEEE Transactions on}, vol.~4,
  no.~2, pp. 1401--1407, Apr 1989.

\bibitem{bolognani2016existence}
S.~Bolognani and S.~Zampieri, ``On the existence and linear approximation of
  the power flow solution in power distribution networks,'' \emph{Power
  Systems, IEEE Transactions on}, vol.~31, no.~1, pp. 163--172, 2016.

\bibitem{boulos2006comprehensive}
P.~F. Boulos, K.~E. Lansey, and B.~W. Karney, \emph{Comprehensive water
  distribution systems analysis handbook for engineers and planners}.\hskip 1em
  plus 0.5em minus 0.4em\relax American Water Works Association, 2006.

\bibitem{como2010robustness}
G.~Como, K.~Savla, D.~Acemoglu, M.~A. Dahleh, and E.~Frazzoli, ``On robustness
  analysis of large-scale transportation networks,'' in \emph{Proceedings of
  the 19th International Symposium on Mathematical Theory of Networks and
  Systems--MTNS}, vol.~5, no.~9, 2010.

\bibitem{68Resh}
J.~Resh, ``The inverse of a nonsingular submatrix of an incident matrix,''
  \emph{IEEE Transactions on Circuit Theory}, vol.~10, p. 131.

\bibitem{positivequad1}
M.~Egozcue, L.~F. Garcia, and W.-K. Wong, ``On some covariance inequalities for
  monotonic and non-monotonic functions,'' \emph{Journal of Inequalities in
  Pure and Applied Mathematics}, vol.~10, no.~3, pp. 1--7, 2009.

\bibitem{lehmann1966}
E.~L. Lehmann, ``Some concepts of dependence,'' \emph{The Annals of
  Mathematical Statistics}, pp. 1137--1153, 1966.

\bibitem{kruskal1956shortest}
J.~B. Kruskal, ``On the shortest spanning subtree of a graph and the traveling
  salesman problem,'' \emph{Proceedings of the American Mathematical society},
  vol.~7, no.~1, pp. 48--50, 1956.

\bibitem{Cormen2001}
T.~H. Cormen, C.~E. Leiserson, R.~L. Rivest, and C.~Stein, \emph{Introduction
  to Algorithms}.\hskip 1em plus 0.5em minus 0.4em\relax The MIT Press, 2001.

\bibitem{testcase2}
U.~Eminoglu and M.~H. Hocaoglu, ``A new power flow method for radial
  distribution systems including voltage dependent load models,''
  \emph{Electric Power Systems Research}, vol.~76, no. 1–3, pp. 106 -- 114,
  2005.

\bibitem{radialsource}
\BIBentryALTinterwordspacing
 [Online]. Available: \url{http://www.dejazzer.com/reds.html}
\BIBentrySTDinterwordspacing

\bibitem{zlotnik2015optimal}
A.~Zlotnik, M.~Chertkov, and S.~Backhaus, ``Optimal control of transient flow
  in natural gas networks,'' in \emph{Decision and Control (CDC), IEEE 52nd
  Annual Conference on}.\hskip 1em plus 0.5em minus 0.4em\relax IEEE, 2015.

\end{thebibliography}
\end{document}